\documentclass[a4paper,11pt]{amsproc}
\usepackage{amssymb}
\usepackage{amsmath}
\usepackage{amsthm,lmodern} 
\usepackage{tikzsymbols}
\usepackage[foot]{amsaddr}

\usepackage{framed, booktabs, float}
\usepackage{xcolor}
\usepackage[backref]{hyperref}
\hypersetup{
    colorlinks,
    linkcolor={red!60!black},
    citecolor={green!60!black},
    urlcolor={blue!60!black}
}
\usepackage{enumerate}
\usepackage{mathtools}
\usepackage{url}
\usepackage[T1]{fontenc}
\usepackage{geometry}
\usepackage{tikz}
\usetikzlibrary{3d}
\usetikzlibrary{calc}

\DeclareMathOperator{\Sym}{Sym}

\theoremstyle{plain}
\newtheorem{Theorem} {Theorem} [section]
\newtheorem{Proposition} [Theorem] {Proposition}
\newtheorem{Lemma} [Theorem] {Lemma}
\newtheorem{Corollary} [Theorem] {Corollary}

\newtheorem{Problem} [Theorem] {Problem}
\newtheorem{Remark} [Theorem] {Remark}

\theoremstyle{definition}

\newcommand{\HG}{{\mathrm{HG}}}
\newcommand{\HGP}{\mathrm{HG_{P}}}
\newcommand{\Prob}{\mathbb{P}}

\newcommand{\Ex}{\textnormal{Ex}}

\renewcommand{\th}{\textsuperscript{th} }

\title{Hat guessing with proper colorings}

\author
[Adriaensen et al.]
{Sam Adriaensen$^1$}
\address{$^1$Department of Mathematics and Data Science, Vrije Universiteit Brussel, Belgium.}

\author
{Peter Bentley$^2$}
\address{$^2$Department of Mathematics, 
William \& Mary, 
Virginia,
United States.}

\author
{Anurag Bishnoi$^3$}
\address{$^3$Delft Institute of Applied Mathematics, Delft University of Technology, Netherlands.}

\author
{Wouter Cames van Batenburg$^4$}
\address{$^4$D\'epartement d'Informatique, Universit\'e libre de Bruxelles, Belgium.}

\author
{Michael Kreiger$^5$}
\address{$^5$Department of Mathematics, 
University of Illinois Urbana-Champaign, 
Illinois, United States.}

\author
{Lars van der Kuil$^6$}
\address{$^6$Leiden Institute of Advanced Computer Science, Leiden University, Netherlands.}

\author
{Saptarshi Mandal$^7$}
\address{$^7$Theoretical Statistics and Department of Mathematics and Statistics, IIT Kanpur, India.}

\author
{Anurag Ramachandran$^8$}
\address{$^8$Theoretical Statistics and Mathematics Unit, Indian Statistical Institute, Bangalore, India.}

\author
{James Tuite$^{9,10}$}
\address{$^9$School of Mathematics and Statistics, Open University, Milton Keynes, UK. }
\address{$^{10}$Department of Informatics and Statistics, Klaip\.{e}da University, Klaip\.{e}da, Lithuania.}

\email{Sam.Adriaensen@vub.be}
\email{pjbentley@wm.edu}
\email{A.Bishnoi@tudelft.nl}
\email{w.p.s.camesvanbatenburg@gmail.com}
\email{mkreiger.math@gmail.com}
\email{larsvanderkuil@gmail.com}
\email{saptarshim23@iitk.ac.in}
\email{nt5.anuragramachandran@gmail.com}
\email{james.t.tuite@open.ac.uk}

\begin{document}

\begin{abstract}
We initiate the study of the hat guessing number of a graph where the adversary is only allowed to provide a proper coloring of the graph. 
This is the largest number $q$ for which there is a guessing strategy on each vertex that only depends on its neighborhood, such that for every proper coloring of the graph with $q$ colors at least one vertex guesses its color correctly.
In this variation, we prove that the hat guessing number of the complete graph on $n$ vertices is $2n - 1$, which is roughly twice the ordinary hat guessing number of the complete graph. 
Our winning strategy is related to finding perfect matchings between the middle layers of the boolean poset of dimension $2n - 1$.
We prove that the hat guessing number of all trees on $n \geq 3$ vertices is equal to $4$. 
We derive general upper bounds in terms of the number of vertices, chromatic number, and maximum degree, and obtain improved bounds for book graphs.
Using our results and an ILP formulation of the problem, we determine the exact hat guessing number for all graphs on at most $4$ vertices, give bounds on graphs on $5$ vertices, and suggest some open problems.
 \end{abstract}

\maketitle

\section{Introduction}

In the hat guessing game \cite{Winkler2001} on a graph $G=(V,E)$, each vertex $v\in V$ hosts a player. 
An adversary places on every player a hat whose color is drawn from a fixed set $[q]=\{1,2,\dots,q\}$. 
Each player sees the hat colors of their neighbors, that is, the colors on $N(v)$, but not their own, and all players must simultaneously output a guess for their own hat color. Before placing the hats, the players should agree on a strategy: for each $v\in V$, a function 
\[
f_v : [q]^{N(v)} \to [q].
\]
A strategy is said to be winning for the players if at least one player guesses correctly, no matter how the adversary assigns the $q$ colors to the vertices. The \emph{hat guessing number} $\HG(G)$ is the largest $q$ for which a winning strategy exists on $G$. 
This parameter has been studied for various families of graphs \cite{Alon2020}, such as planar graphs \cite{Bradshaw2022, latyshev2023}, cycles \cite{Szczechla2017} and book graphs \cite{He2022}. 
While we know some bounds on this parameter, for most graphs finding the exact, or even the asymptotic value, remains open. 
\newline

In this paper, we study a new variant of this game in which the adversary is only allowed to place hats whose colors form a \textit{proper coloring} of the graph $G$, that is, no two neighboring players can get the same colored hat. 
We define $\HGP(G)$ as the largest number $q$ of colors such that for graph $G$ there exists a strategy where for every proper coloring of $G$ using $q$ colors, at least one vertex guesses correctly. 
Clearly, $\HG(G) \leq \HGP(G)$.
We will show that this bound is far from tight. 
For cliques $K_n$, it is known that the ordinary hat guessing number is equal to $n$ \cite{Feige2004}.
We prove the following for the proper hat guessing number of cliques. 

\begin{Theorem}\label{Thm:CompleteGraphs}
 For every integer $n \geq 2$,
 \[\HGP(K_n) = 2n - 1.\]
\end{Theorem}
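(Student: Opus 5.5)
The proof has two halves: a counting argument showing $\HGP(K_n)\le 2n-1$, and an explicit winning strategy for $q=2n-1$ colors built from a perfect matching between the two middle layers of the boolean lattice on $[2n-1]$.

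\emph{Upper bound.} Fix $q$ and a strategy on $K_n$. A proper coloring of $K_n$ with $q$ colors is an injective map $V\to[q]$, so there are $q!/(q-n)!$ of them. A vertex $v$ sees an injective coloring of the other $n-1$ vertices, so $v$ has exactly $q!/(q-n+1)!$ possible views. For each view, $f_v$ outputs one color, and together with the view this determines at most one proper coloring of $K_n$ in which $v$ guesses correctly. (It determines none if the guess is among the colors seen.) Hence the number of pairs (coloring, vertex that guesses correctly) is at most $n\cdot q!/(q-n+1)!$. A winning strategy needs every coloring to appear in at least one pair, so
\[
n\cdot\frac{q!}{(q-n+1)!}\ \ge\ \frac{q!}{(q-n)!},
\]
which simplifies to $n\ge q-n+1$, that is, $q\le 2n-1$.

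\emph{Lower bound.} Take $q=2n-1$ and let $B$ be the bipartite inclusion graph between the $(n-1)$-subsets and the $n$-subsets of $[2n-1]$. Both sides have size $\binom{2n-1}{n-1}=\binom{2n-1}{n}$. Every $(n-1)$-set lies in exactly $n$ of the $n$-sets, and every $n$-set contains exactly $n$ of the $(n-1)$-sets. So $B$ is $n$-regular and, by Hall's theorem (or König's theorem for regular bipartite graphs), it has a perfect matching $M$. By construction $S\subset M(S)$ for every $(n-1)$-set $S$.

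The strategy is the same for every vertex. A vertex seeing the color set $S$ (necessarily an $(n-1)$-set, since the coloring is proper) guesses the unique element of $M(S)\setminus S$; it ignores which neighbor wears which color. To see that this wins, take any proper coloring and let $T$ be its set of $n$ colors. The vertex with color $c$ sees $T\setminus\{c\}$ and guesses correctly if and only if $M(T\setminus\{c\})=T$. Since $M$ is a perfect matching, $T$ is matched to exactly one $(n-1)$-subset, and that subset has the form $T\setminus\{c_0\}$ for some $c_0\in T$. Because $T$ is exactly the set of colors used, some vertex wears $c_0$, and that vertex guesses correctly.

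Combining the two halves gives $\HGP(K_n)=2n-1$ for all $n\ge2$. Neither step is hard. The only point needing care is the existence of the perfect matching, and regularity of $B$ settles it without any explicit construction.
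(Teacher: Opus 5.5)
Your proof is correct and follows essentially the paper's route. The upper bound is the same counting argument as Lemma~\ref{Lem:ChromaticNumber} specialized to $K_n$, where every proper coloring uses exactly $\chi(K_n)=n$ colors. The lower bound comes, as in the paper, from a perfect matching in an $n$-regular bipartite graph via Hall's theorem. The only difference is where you apply Hall. You apply it directly to the middle-layers inclusion graph, which gives a symmetric strategy depending only on the set of visible colors. The paper's main proof instead matches ordered strings $\mathcal{S}_n$ with $[n]\times\mathcal{S}_{n-1}$, and only afterwards shows that any middle-layer matching lifts to that graph, as in its Greene--Kleitman construction; your version simply skips the lifting step.
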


We prove the lower bound using a strategy based on perfect matchings between the set of $(n - 1)$ element subsets and $n$-element subsets of $[2n - 1]$, and the upper bound from the following more general bound on all graphs.

\begin{Lemma}\label{Lem:ChromaticNumber}
    For every graph $G$ on $n$ vertices 
    \[\HGP(G) \leq n + \chi(G) - 1.\]
\end{Lemma}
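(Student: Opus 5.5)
The plan is a counting argument over a small family of proper colorings in which each color class of an optimal proper coloring of $G$ is monochromatic. Set $q = n+\chi(G)$ and fix an arbitrary strategy $(f_v)_{v\in V}$. It suffices to exhibit a proper $q$-coloring under which every vertex guesses wrong. Let $\chi=\chi(G)$ and fix a partition $V = V_1\cup\dots\cup V_\chi$ into independent sets, all nonempty.

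\textbf{Step 1 (the family).} Let $\mathcal F$ be the set of colorings $c$ that satisfy two conditions: $c$ is constant on each $V_j$, say with value $a_j$, and the values $a_1,\dots,a_\chi$ are pairwise distinct. Every $c\in\mathcal F$ is proper. Indeed, adjacent vertices lie in different classes, because each $V_j$ is independent, and different classes receive different colors. We identify $\mathcal F$ with the set of injective tuples $(a_1,\dots,a_\chi)\in[q]^\chi$, so $|\mathcal F| = (q)_\chi$.

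\textbf{Step 2 (key observation).} For $v\in V_j$, all neighbours of $v$ lie outside $V_j$. Hence, on $\mathcal F$, the guess $f_v(c|_{N(v)})$ is a function of $(a_i)_{i\ne j}$ alone. Fix the tuple $(a_i)_{i\neq j}$. The admissible values of $a_j$ are the $q-\chi+1$ colors not among the $a_i$. The vertices of $V_j$ together produce at most $|V_j|$ guesses. So at most $|V_j|$ of these $q-\chi+1$ choices of $a_j$ make some vertex of $V_j$ guess correctly.

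\textbf{Step 3 (union bound).} Summing over the choices of $(a_i)_{i\ne j}$ shows the following: the number of $c\in\mathcal F$ in which some vertex of $V_j$ guesses correctly is at most
\[
\frac{|V_j|}{q-\chi+1}\,|\mathcal F|.
\]
Summing over $j$, the number of colorings in $\mathcal F$ in which somebody guesses correctly is at most
\[
\frac{n}{q-\chi+1}\,|\mathcal F| = \frac{n}{n+1}\,|\mathcal F| < |\mathcal F|.
\]
Thus some $c\in\mathcal F$ defeats the strategy. Since the strategy was arbitrary, no winning strategy exists with $n+\chi$ colors. Moreover, a winning strategy for $q$ colors restricts to one for fewer colors, so no larger $q$ works either. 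Hence $\HGP(G)\le n+\chi(G)-1$.

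The main point to get right is Step 2. The choice of family must guarantee that each vertex's guess is independent of its own class's color. Restricting to class-monochromatic colorings achieves this, since classes are independent sets. It also gives each class $q-\chi+1$ free color choices rather than roughly $q-n$, and this is exactly where $\chi$ enters the bound. As a sanity check, for $K_n$ we have $\chi=n$ and the bound reads $2n-1$, matching Theorem~\ref{Thm:CompleteGraphs}.
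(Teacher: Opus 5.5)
Your proof is correct and is essentially the paper's argument: restrict to proper colorings that use only $\chi(G)$ colors, bound each vertex's chance of guessing correctly by $1/(q-\chi(G)+1)$, and sum over the $n$ vertices (your union bound is the paper's expectation count). Fixing a single partition into color classes and recoloring whole classes makes explicit the step the paper leaves implicit: why $v$'s color ranges uniformly over at least $q-\chi(G)+1$ values, independently of its guess, while staying inside the family of $\chi(G)$-colorings.
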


Since $\chi(G) \leq n$ for any graph on $n$ vertices, with equality if and only if $G = K_n$, we can conclude that $\HGP(G) = 2n - 1$ if and only if $G = K_n$.

In addition, we upper bound $\HGP(G)$ using the maximum degree $\Delta(G)$.
For the classical hat guessing number, a straightforward application of the Lov\'asz Local Lemma proves that $\HG(G) < e \Delta(G)$, see e.g.\ \cite[Theorem 1.5]{Alon2020}.
For the proper hat guessing number, using only proper colorings can create extra dependencies between the hat colors of non-neighboring vertices, and we have to be more careful in our analysis.
Using a Lopsided Lov\'asz Local Lemma, we prove the following bound.

\begin{Theorem}
\label{thm:HGPlinear}
Let $G$ be a graph of maximum degree $\Delta\ge 2$.  Then
$$ \HGP(G) < \Delta+ \left\lceil \frac{\Delta^\Delta}{(\Delta-1)^{\Delta-1}}\right\rceil  <(e+1)\Delta.$$
\end{Theorem}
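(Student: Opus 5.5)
Fix $\Delta\ge 2$, put $k=\left\lceil \Delta^\Delta/(\Delta-1)^{\Delta-1}\right\rceil$ and $q=\Delta+k$, and fix any strategy $(f_v)_{v\in V}$ with $q$ colors. We want a proper $q$-coloring of $G$ in which no vertex guesses correctly. The second inequality is routine: $\Delta^\Delta/(\Delta-1)^{\Delta-1}=\Delta(1+\tfrac{1}{\Delta-1})^{\Delta-1}<e\Delta$. So the content is the first inequality, and the plan is to pick a proper coloring at random and apply a Lopsided Lov\'asz Local Lemma (LLLL) to the bad events $A_v=\{f_v(c|_{N(v)})=c_v\}$.

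The number $\Delta^\Delta/(\Delta-1)^{\Delta-1}$ is the reciprocal of $\max_{x}x(1-x)^{\Delta-1}$, attained at $x=1/\Delta$. This indicates the LLLL form we need: every bad event has at most $\Delta-1$ ``lopsided neighbours'' and conditional probability at most $1/k\le \frac1\Delta(1-\frac1\Delta)^{\Delta-1}$. The extra $\Delta$ colours are reserved for properness. Once $v$'s neighbours are coloured, at least $q-\Delta\ge k$ colours remain available to $v$, and the guess $f_v$ is a fixed function of those neighbours. So if $v$'s colour is uniform over its available colours given everything else relevant, then $\Pr[A_v\mid\cdot]\le 1/k$.

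I would implement this with a sequential random model that keeps the conditional structure clean. Order the vertices, say along a BFS/DFS order of each component. Colour each vertex $v$ uniformly at random from the colours not used by its already-coloured neighbours. Then $c_v$, conditioned on the full past, is uniform on a set of size at least $k$. The event $A_v$ is most naturally handled at the last moment when a vertex of $N[v]$ is coloured. If this is $v$ itself, then $\Pr[A_v\mid\text{past}]\le 1/k$ directly. If it is a neighbour $u$, then $A_v$ is charged to $u$'s choice, which is still uniform over at least $k$ options, so at most one value of $c_u$ triggers it given the rest. With the ordering chosen so that each vertex has a parent preceding it, each such event should conflict with at most $\Delta-1$ others: the parent edge is ``used up'' by the conditioning, which is the source of the $\Delta-1$ rather than $\Delta$. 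Then I would verify the lopsided condition
\[
\Pr\Bigl[A_v \,\Bigm|\, \bigcap_{w\in S}\overline{A_w}\Bigr]\le x(1-x)^{\Delta-1}\quad\text{with } x=\tfrac1\Delta,
\]
for every set $S$ of events non-adjacent to $A_v$. Taking $\Delta\ge2$ guarantees this product is positive, and LLLL then gives $\Pr[\bigcap_v\overline{A_v}]>0$, which is a proper colouring defeating the strategy.

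The main obstacle is exactly the point highlighted in the introduction. Restricting to proper colourings creates dependencies between non-adjacent vertices: the conditional distribution of $c_v$ depends on which colours neighbours took, and those depend on further vertices. So the standard variable-based dependency graph is not valid, and one must prove the lopsided (negative-dependence) inequality directly. My first attempt would be to condition on all colours outside $N[v]$ (or on the history up to the relevant step). I would then argue that, whatever that conditioning is, the event $A_v$ still has probability at most $1/k$, because the decisive vertex has at least $k$ equally likely free colours. If proving $\Delta-1$ neighbours this way fails, I would fall back on a Moser--Tardos/entropy-compression resampling argument with the same counting. That would give the slightly weaker dependency degree $\Delta$ and would still yield a bound below $(e+1)\Delta$.
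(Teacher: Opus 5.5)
Your numerology is right, since $x(1-x)^{\Delta-1}$ at $x=1/\Delta$ equals $(\Delta-1)^{\Delta-1}/\Delta^{\Delta}$. However, the mechanism you propose for getting lopsided degree $\Delta-1$ does not work, and it is the heart of the proof. Three things fail.

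First, the charging step is false. Suppose the last vertex of $N[v]$ to be coloured is a neighbour $u$, and fix everything else. The event $A_v=\{f_v(c|_{N(v)})=c_v\}$ can then hold for many, even all, admissible values of $c_u$, because $f_v$ is arbitrary. For example, if $v$ guesses colour $1$ whenever no neighbour shows $1$, and $c_v=1$, then $A_v$ occurs whatever $c_u$ is. Only $v$'s own colour enters $A_v$ in the ``at most one value'' way.

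Second, having at most $\Delta-1$ lopsided neighbours is not a property of these events, so the parent edge cannot simply be declared used up. For an edge $uv$ the strategy can make $A_u$ and $A_v$ disjoint. Take a long cycle with $q=6$ (so $\Delta=2$, $k=4$): let $v$ guess $c_u+1$ and $u$ guess $c_v+1$ modulo $6$, passing to the next colour when that one is blocked. Under the uniform proper colouring, $\Prob(A_v\mid\overline{A_u})=\Prob(A_v)/(1-\Prob(A_u))\approx 0.24/0.76>1/4$, so the hypothesis fails as soon as $u$ may lie in $S$.

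Third, in your sequential process $c_v$ is uniform only given the \emph{past}. The local lemma conditions on $\bigcap_{w\in S}\overline{A_w}$ with $w$ possibly coloured later. Given all other colours, the law of $c_v$ is weighted by the reciprocal list sizes of its later neighbours, so even the bound $1/k$ is lost.

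The missing idea is to take $\Gamma$ uniform over all proper $q$-colourings and keep $G$ itself, of maximum degree $\Delta$, as the lopsidependency graph. For $S\subseteq V\setminus N[v]$, every event $\overline{A_u}$ with $u\in S$ is a function of $\Gamma|_{V\setminus\{v\}}$. It is not a function of the colours outside $N[v]$, as you suggest, since vertices at distance two from $v$ see colours in $N(v)$. Given $\Gamma|_{V\setminus\{v\}}$, the colour of $v$ is uniform on a list of size at least $q-\Delta\ge k$. Averaging yields $\Prob(A_v\mid\bigcap_{u\in S}\overline{A_u})\le 1/k\le(\Delta-1)^{\Delta-1}/\Delta^{\Delta}$.

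The threshold then comes from Shearer's bound for dependency graphs of maximum degree $\Delta$, in the lopsided form of Knuth's Lemma~L and Theorem~J (Lemma~\ref{lem:LLLL}). The ``$\Delta-1$'' you were after lives inside the inductive proof of that bound, not in the dependency structure of the events. When the neighbours of $A_v$ in the conditioning set are peeled off, each of them has $A_v$ outside its own conditioning set, hence at most $\Delta-1$ neighbours there.

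Your fallback does not rescue the statement. The ordinary local lemma with degree $\Delta$ needs $q-\Delta\ge(\Delta+1)^{\Delta+1}/\Delta^{\Delta}=\Delta(1+1/\Delta)^{\Delta+1}>e\Delta$. This gives neither the stated bound nor one below $(e+1)\Delta$. Moser--Tardos would in addition require independent underlying variables, which the proper-colouring measure does not have.
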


Another family of graphs for which we can determine the number exactly are trees. 
Note that a tree on $2$ vertices is isomorphic to $K_2$ and hence has the proper hat guessing number equal to $3$. 
We show that the unique tree on $3$ vertices has proper hat guessing number equal to $4$, and then inductively deduce the following.

\begin{Theorem}
 \label{Thm:Trees}
For every tree $T$ on at least $3$ vertices, 
\[\HGP(T) = 4.\]
\end{Theorem}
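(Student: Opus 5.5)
The plan is to prove the two inequalities separately: $\HGP(T)\ge 4$ by reducing to the path on three vertices, and $\HGP(T)\le 4$ by induction on the number of vertices, removing one leaf at a time.

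\textbf{Lower bound.} First I would note that $\HGP$ is monotone under taking subgraphs. Let $H\subseteq G$. Every proper coloring of $G$ restricts to a proper coloring of $H$, since $H$ has fewer edges. So the vertices of $H$ can play a winning strategy for $H$, ignoring any extra neighbours, while the remaining vertices guess arbitrarily. Hence $\HGP(G)\ge\HGP(H)$. Every tree on at least $3$ vertices contains $P_3$, so it suffices to exhibit a winning strategy on the path $x y z$ with colors in $\mathbb{Z}_4$:
\begin{itemize}
\item leaf $x$ guesses $c_y+1$;
\item leaf $z$ guesses $c_y+2$.
\end{itemize}
If both leaves are wrong and the coloring is proper, then $c_x\in\{c_y+2,c_y+3\}$ and $c_z\in\{c_y+1,c_y+3\}$. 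I would check that $c_y$ is then uniquely determined by the difference $d=c_x-c_z$:
\begin{itemize}
\item $d=1$ or $d=3$ forces $c_y=c_x-2$;
\item $d=2$ or $d=0$ forces $c_y=c_x-3$.
\end{itemize}
So $y$ guesses this value and wins whenever both leaves fail. This gives $\HGP(P_3)\ge 4$.

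\textbf{Upper bound.} Lemma~\ref{Lem:ChromaticNumber} only gives $n+1$ for trees, so I would instead show by induction that the adversary beats every strategy on a tree with $5$ colors. To make the induction work, I would strengthen the statement slightly: the adversary wins with $5$ colors on any tree, for any strategy. The base case is $K_2$, where $\HGP(K_2)=3<5$ by Theorem~\ref{Thm:CompleteGraphs}, so the adversary wins.

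For the inductive step, take a leaf $v$ with neighbour $u$, and let $v$'s guess be $g(c_u)$. For each color $a$ of $u$, define
\[
S(a)=[5]\setminus\{a,g(a)\},
\]
which has at least $3$ elements. If the adversary gives $v$ a color from $S(a)$, the coloring stays proper and $v$ guesses wrong. Now fix the colors of all vertices other than $u,v$; call this assignment $\rho$. Say a color $a$ is \emph{bad} for $\rho$ if $f_u(x,\rho)=a$ for every $x\in S(a)$. If $a$ is not bad, the adversary can choose $c_v\in S(a)$ so that $u$ also guesses wrong.

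The key step is showing that at most one color is bad. Since $f_u(\cdot,\rho)$ is a function, two distinct bad colors $a\ne a'$ would need disjoint sets $S(a)$ and $S(a')$. But two subsets of $[5]$ of size at least $3$ cannot be disjoint.

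So I would define a strategy on $T-v$ in which $u$ guesses the unique bad color for $\rho$ (or an arbitrary color if there is none), and all other vertices keep their strategies. None of the other vertices sees $v$, so their guesses do not depend on $c_v$. By induction, the adversary has a proper $5$-coloring of $T-v$ in which every vertex guesses wrong. In particular, $u$'s color $a$ is not bad, so it can be extended by a suitable $c_v\in S(a)$ to a losing proper coloring of $T$. This gives $\HGP(T)\le 4$.

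I expect the main obstacle to be this reduction step: seeing that a leaf effectively gives its neighbour at most one extra guess. The counting behind it is exactly the statement that two subsets of size $3$ of $[5]$ always intersect, and this is what makes $5$ the threshold.
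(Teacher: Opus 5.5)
Your proof is correct and follows essentially the paper's route. The lower bound comes from an explicit winning strategy on $P_3$ (yours over $\mathbb{Z}_4$, the paper's over $(\mathbb{Z}/2\mathbb{Z})^2$) together with subgraph monotonicity, and the upper bound from leaf-by-leaf induction, where your ``unique bad color'' argument (two subsets of $[5]$ of size at least $3$ must meet) is the same counting as the paper's majority vote in Lemma~\ref{Lem:LeafRemoval} ($q-2>q/2$ exactly when $q\ge 5$); the paper states that step for arbitrary graphs with $\HGP\ge 5$ and anchors the induction at $P_3$ via Lemma~\ref{Lem:ChromaticNumber}, whereas you anchor the upper-bound induction at $K_2$.
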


For the Book graphs $B_{k, n}$, obtained by connecting all vertices of a clique of size $k$ to an independent set of size $n$ we prove the following.

\begin{Theorem}\label{Thm:BookGraphUpperBound}
 If $n > e^2 k$, then
 \[
  \HGP(B_{k,n}) < \left( \frac{e^2 k}{n} \right)^{1/n}n + k + 1.
 \]
\end{Theorem}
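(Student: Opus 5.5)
Throughout, write $K$ for the clique of size $k$ and $I$ for the independent set of size $n$. Every vertex of $I$ sees exactly the colors on $K$, and every vertex of $K$ sees the other $k-1$ clique vertices together with all of $I$. Fix $q \ge (e^2k/n)^{1/n}n + k + 1$ and an arbitrary strategy. I will show by a double-counting (first-moment) argument that some proper coloring defeats it. The proper colorings of $B_{k,n}$ are exactly the pairs $(c,x)$ where $c$ is an injective coloring of $K$ and $x\in[q]^I$ avoids every color of $c$.

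\textbf{Step 1 (neutralise $I$).} For each injective $c$, the guesses $g_i(c)$ of the vertices $i\in I$ are determined by $c$ alone. Each $i\in I$ therefore has at least $m:=q-k-1$ colors that avoid both $c(K)$ and $g_i(c)$. Fix a set $A_i(c)$ of exactly $m$ such colors and let $S_c=\prod_i A_i(c)$. Every pair $(c,x)$ with $x\in S_c$ is a proper coloring on which all of $I$ guesses wrong. The number of such pairs is $(q)_k\, m^n$.

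\textbf{Step 2 (bound the pairs won by $K$).} Count the pairs $(c,x)$ with $x\in S_c$ in which some clique vertex $v$ guesses correctly. The guess of $v$ is a function of $x$ and $c|_{K\setminus v}$. Hence, for fixed $v$ and $x$, at most one value of $c(v)$ is correct for each choice of $c|_{K\setminus v}$. This gives at most $(q)_{k-1}$ clique colorings per pair $(v,x)$, where I relax $x\in S_c$ to $x\in[q]^I$. Summing over $v$ and $x$, the number of pairs won by $K$ is at most $k\,(q)_{k-1}\,q^n$. It therefore suffices to show
\[
(q)_k\, m^n > k\,(q)_{k-1}\, q^n, \qquad\text{i.e.}\qquad (q-k+1)\,m^n > k\,q^n .
\]

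\textbf{Step 3 (the numerics; main obstacle).} The crude relaxation $x\in[q]^I$ in Step 2 loses a factor of $(q/m)^n$. That factor is too large unless $m\gg n$, so it is the step I expect to need refinement. My first attempt is to sharpen the count for a fixed $v$ and fixed $c|_{K\setminus v}$. The colorings $x$ that are compatible with some $c(v)$ must lie in $\prod_i A_i(c)$, so I would count only those $x$. Since $v$'s correct color is unique for each $x$, the pairs won through $v$ number at most $(q)_{k-1}\cdot$(number of $x$ compatible with some completion). To control this, I would make the sets $A_i(c)$ depend on $c$ only through $c|_{K\setminus v}$ wherever possible, for instance by choosing $A_i(c)$ inside $[q]\setminus(c(K)\cup\{g_i(c)\})$ in a canonical way. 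I expect this to bring the bound to roughly $k\,(q)_{k-1}\,(m+1)^n$ or $k\,(q)_{k-1}\,e\,m^{n-1}q$. The target inequality then becomes of the form $(q-k+1)\,m^n > k\,e^{2}\,m^{n-1}\cdot(\text{lower-order factor})$.

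After substituting $m\ge (e^2k/n)^{1/n}n$, so that $m^n\ge e^2k\,n^{n-1}$, and using $(1+1/n)^{n}<e$ to absorb the ratios between $m$, $m+1$ and $n$, the inequality should reduce to exactly the stated threshold. The hypothesis $n>e^2k$ is what keeps the base $(e^2k/n)^{1/n}$ below $1$, and hence $m$ of order $n$, so that these $e$-factor estimates are valid. If the canonical choice of the sets $A_i(c)$ does not decouple cleanly, the fallback is a Lopsided Local Lemma argument in the spirit of Theorem~\ref{thm:HGPlinear}. In that argument, $x$ is uniform on $S_c$ and the events $E_v=\{v \text{ guesses correctly}\}$ each have probability at most about $1/m$.
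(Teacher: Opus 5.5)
There is a genuine gap. As you note yourself, the bound $k\,(q)_{k-1}\,q^n$ from Step~2 is too weak. Near the threshold $m\approx n$, so the inequality $(q-k+1)m^n>k\,q^n$ demands $\bigl(1+\tfrac{k+1}{m}\bigr)^n<\tfrac{m+2}{k}$, i.e.\ roughly $e^{k+1}<n/k$, and the hypothesis $n>e^2k$ does not give this. Step~3 never replaces that bound by a proved estimate. The ``canonical'' choice of the sets $A_i(c)$ is not specified. The bounds $k\,(q)_{k-1}(m+1)^n$ and $k\,(q)_{k-1}\,e\,m^{n-1}q$ are expectations rather than derivations. The final inequality is only described as being ``of the form'' something, and the Lopsided Local Lemma fallback is not carried out. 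So as written the proposal does not prove the theorem.

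The missing idea is a one-line observation, and no decoupling of the $A_i(c)$ is needed. Fix $v\in K$ and $c|_{K\setminus v}$. Every $x$ occurring in a proper coloring $(c,x)$ must avoid the $k-1$ colors of $c(K\setminus v)$. So you should relax $x\in S_c$ to $x\in\bigl([q]\setminus c(K\setminus v)\bigr)^I$, not to $[q]^I$. Equivalently, the pairs $(c|_{K\setminus v},x)$ range over proper colorings of $B_{k,n}-v\cong B_{k-1,n}$, of which there are $(q)_{k-1}(q-k+1)^n$. Each of them fixes $v$'s guess, hence allows at most one winning value of $c(v)$. This gives at most $k\,(q)_{k-1}(m+2)^n$ pairs won by $K$. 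Since $(q)_k=(q)_{k-1}(m+2)$, your sufficient condition becomes $m^n>k\,(m+2)^{n-1}$. This is exactly the paper's Lemma~\ref{Lem:BookGraphCondition}, and with this fix your argument coincides with the paper's union bound.

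The numerics are then short. The map $m\mapsto m^n/(m+2)^{n-1}$ is increasing for $m>0$, so it suffices to check $m_0=(e^2k/n)^{1/n}n$. For this value $m_0^n=e^2k\,n^{n-1}$, and $m_0<n$ by the hypothesis $n>e^2k$. Hence
\[
k\,(m_0+2)^{n-1}<k\,(n+2)^{n-1}=k\,n^{n-1}\Bigl(1+\tfrac{2}{n}\Bigr)^{n-1}<e^2k\,n^{n-1}=m_0^n .
\]
So the estimate you need is $(1+2/n)^{n-1}<e^2$, not $(1+1/n)^n<e$. The role of $n>e^2k$ is precisely to give $m_0+2<n+2$.
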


In Section~\ref{sec:general_bounds}, we prove some bounds on our variation of the hat guessing number in terms of the chromatic number of the graph, and the maximum degree, namely Lemma~\ref{Lem:ChromaticNumber} and Theorem~\ref{thm:HGPlinear}. 
In Section~\ref{sec:complete_graphs}, we prove Theorem~\ref{Thm:CompleteGraphs}.
Theorems~\ref{Thm:Trees} and \ref{Thm:BookGraphUpperBound} are then proved in Section~\ref{sec:trees} and \ref{sec:book_graphs}, respectively. 
In Section~\ref{sec:small_graphs} we determine the exact hat guessing number of all graphs on at most $4$ vertices, and give bounds for graphs on $5$ vertices.

\section{General bounds}
\label{sec:general_bounds}

In this section, we give some general upper bounds on $\HGP(G)$ in terms of other parameters of $G$.
We start with a simple but useful observation.

\begin{Lemma}\label{Lem:Subgraphs}
 If $H$ is a subgraph of $G$, then $\HGP(H) \leq \HGP(G)$.
\end{Lemma}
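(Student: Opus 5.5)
The plan is to lift a winning strategy on $H$ to one on $G$. Set $q = \HGP(H)$ and fix a strategy $(f_v)_{v \in V(H)}$ on $H$ with $f_v : [q]^{N_H(v)} \to [q]$ such that every proper $q$-coloring of $H$ has some vertex guessing correctly. It suffices to produce a winning strategy on $G$ with the same $q$ colors, because then $\HGP(G) \geq q$.

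The strategy on $G$ is defined vertex by vertex:
\begin{itemize}
\item For $v \in V(H)$, let $g_v(c) = f_v\bigl(c|_{N_H(v)}\bigr)$. This is well defined because $N_H(v) \subseteq N_G(v)$, so $v$ sees every color its $H$-strategy needs. The extra $G$-neighbours are simply ignored.
\item For $v \in V(G) \setminus V(H)$, let $g_v$ be an arbitrary constant guess.
\end{itemize}

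To check that this wins, take any proper $q$-coloring $c$ of $G$. Its restriction $c|_{V(H)}$ is a proper coloring of $H$, since every edge of $H$ is an edge of $G$. By the choice of $(f_v)$, some $v \in V(H)$ has $f_v\bigl(c|_{N_H(v)}\bigr) = c(v)$, and this is exactly $g_v(c|_{N_G(v)})$. So $v$ guesses correctly in $G$, the strategy $(g_v)$ is winning, and $\HGP(G) \geq \HGP(H)$.

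The only step that needs care is the direction of the restriction. Properness passes from $G$ down to $H$, because $H$ has fewer constraints, and visibility passes from $H$ up to $G$, because $G$ has more neighbours. Both directions work in our favour, so I expect no real obstacle beyond stating this correctly.
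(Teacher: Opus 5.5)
Your proof is correct: ignoring the extra $G$-neighbours, guessing arbitrarily off $V(H)$, and restricting a proper coloring of $G$ to $V(H)$ gives a winning strategy on $G$ with $q=\HGP(H)$ colors. The paper states this lemma as a ``simple but useful observation'' without proof, and your lifting argument is the routine verification it leaves implicit.
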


Let $\chi(G)$ denote the chromatic number of the graph $G$, i.e.\ the minimum number of colors required for a proper coloring of $G$.

\bigskip

{\scshape Lemma \ref{Lem:ChromaticNumber}.}
{\it 
 For every graph $G$ on $n$ vertices 
 \[
  \HGP(G) \leq n + \chi(G) - 1.
 \]}

\begin{proof}
 Suppose that there is a winning strategy for the proper coloring hat guessing game on $G$ with $q$ colors.
 We restrict our focus to the proper colorings of $G$ that use only $\chi(G)$ colors in $[q]$.
 Given any such coloring $c$ of $G$ and a vertex $v$ of $G$, the neighbors of $v$ are colored by at most $\chi(G)-1$ different colors.
 Hence, there are at least $q-\chi(G)+1$ possible colors for $v$, and for only one of these colors does $v$ guess correctly.
 Therefore, if we uniformly randomly choose such a coloring $c$, then $v$ has a probability of at most $\frac{1}{q-\chi(G)+1}$ to guess correctly.
 This means that the expected number of correct guesses is at most $\frac{n}{q-\chi(G)+1}$.
 On the other hand, since the strategy is winning, for every coloring $c$, at least one vertex guesses correctly.
 Thus, $\frac{n}{q-\chi(G)+1} \geq 1$, or equivalently $q \leq n + \chi(G) - 1$.
\end{proof}

\begin{Remark}
 \label{Rmk:ChromaticNumber}
 Following the proof of Lemma \ref{Lem:ChromaticNumber}, one sees that the bound $\HGP(G) \leq n + \chi(G) - 1$ can only be tight if for every proper coloring of $G$ using $\chi(G)$ colors, for every vertex of $G$, its neighbors are colored using $\chi(G)-1$ colors.
 In other words, if there exists at least one coloring of $G$ with $\chi(G)$ colors for which at least one vertex sees at most $\chi(G)-2$ colors in its neighborhood, then
 \[
  \HGP(G) \leq n + \chi(G) - 2.
 \]
\end{Remark}

We can also give a bound that relates the proper hat guessing number of $G$ to the ordinary hat guessing number, that outperforms the above bound in some cases, and might potentially be used to lower bound $\HG(G)$ using $\HGP(G)$.

\begin{Proposition}
 \label{Lem:BoundFromOrdinaryHG}
 For any graph $G$, $\HGP(G) < \chi(G)(\HG(G) + 1)$.
\end{Proposition}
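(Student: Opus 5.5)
The plan is a direct reduction: from any strategy for the proper game with many colors, build a strategy for the ordinary game with $\HG(G)+1$ colors. By the definition of $\HG(G)$, that ordinary strategy must fail on some hat assignment, and we will transport this failure back to a proper coloring. Concretely, we show that for every $q \geq \chi(G)(\HG(G)+1)$ no strategy wins the proper game with $q$ colors. This gives $\HGP(G) \le \chi(G)(\HG(G)+1) - 1$, which is the claim.

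\emph{Setup.} Let $\chi = \chi(G)$, $h = \HG(G)+1$ and $q \ge \chi h$. Fix a proper coloring of $G$ with color classes $V_1,\dots,V_\chi$, which are independent sets. Since $q \ge \chi h$, we can choose pairwise disjoint blocks $B_1,\dots,B_\chi \subseteq [q]$, each of size $h$. Fix bijections $\phi_i : [h] \to B_i$. Any assignment $a : V \to [h]$ lifts to the coloring $\hat a$ defined by $\hat a(v) = \phi_i(a(v))$ for $v \in V_i$.

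\emph{The lift is proper.} Take an edge $uv$. Its endpoints lie in different classes $V_i \ne V_j$, because each class is independent. Hence $\hat a(u) \in B_i$ and $\hat a(v) \in B_j$, and these blocks are disjoint, so $\hat a(u) \neq \hat a(v)$.

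\emph{The induced ordinary strategy.} Let $(f_v)_{v\in V}$ be any strategy for the proper game with $q$ colors. For $v \in V_i$, define $g_v : [h]^{N(v)} \to [h]$ as follows. Vertex $v$ sees its neighbors' colors in $[h]$ and knows the class of each neighbor, so it can compute their lifted colors. It then applies $f_v$ to these lifted colors. If the output lies in $B_i$, it guesses $\phi_i^{-1}$ of that output; otherwise it guesses $1$. The only point needing care is that $g_v$ depends only on $v$'s neighborhood, and this holds because the partition and the bijections are fixed in advance.

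\emph{Transporting the failure.} Since $h > \HG(G)$, the strategy $(g_v)$ is not winning. So there is an assignment $a$ under which no vertex guesses correctly. Consider the proper coloring $\hat a$, and suppose some $v \in V_i$ guessed correctly under $f_v$, i.e.\ $f_v$ outputs $\hat a(v) = \phi_i(a(v)) \in B_i$. Then by construction $g_v$ outputs $a(v)$, contradicting the choice of $a$. Hence $\hat a$ is a proper coloring with entries in $[q]$ on which every $f_v$ guesses wrong, so $(f_v)$ is not winning.

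I do not expect a genuine obstacle. The step to verify carefully is the last one: a correct proper-game guess always translates into a correct ordinary-game guess, because a correct guess necessarily lands in the vertex's own block.
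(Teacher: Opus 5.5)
Your proof is correct and is essentially the paper's argument. The paper identifies the color set with $[\chi(G)]\times[\HG(G)+1]$ and fixes a proper coloring $c_1$ with $\chi(G)$ colors. It lifts an arbitrary assignment $c_2$ to the proper coloring $c_1\times c_2$, and each vertex outputs the second coordinate of its proper-game guess; this is exactly your block-and-bijection construction, phrased as a contradiction. Your version handles every $q\ge\chi(G)(\HG(G)+1)$ directly, with a fallback guess when the output lies outside the vertex's block. This has the small advantage of not relying implicitly on the fact that a winning strategy with $\HGP(G)$ colors yields one with exactly $\chi(G)(\HG(G)+1)$ colors.
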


\begin{proof}
 Let $q = \HG(G)$, and suppose for the sake of contradiction that $\HGP(G) \geq \chi(G)(q+1)$.
 We use the set $[\chi(G)] \times [q+1]$ as the set of $\chi(G)(q+1)$ colors.
 Let $\{ f_w : w \in V(G)\}$ be a winning strategy for the proper coloring hat guessing game on $G$ using this set of colors.
 Fix a proper coloring $c_1: V(G) \to [\chi(G)]$ of $G$.
 We now construct a winning strategy for the ordinary hat guessing game on $G$ using $q+1$ colors.
 For any (not necessarily proper) coloring $c_2$ of $G$ with colors in $[q+1]$, define the proper coloring
 \[
  c_1 \times c_2: V(G) \to [\chi(G)] \times [q+1]:
  v \mapsto (c_1(v), c_2(v))
 \]
 of $G$.
 Let $g_w$ map the coloring $c_2$ to the second entry of $f_w(c_1 \times c_2)$, i.e.\ the entry in $\{1,\dots,q+1\}$.
 Since the $f_w$ strategies are winning, for every coloring $c_2$, there exists a vertex $v \in V(G)$ such that $f_v(c_1 \times c_2) = (c_1(v),c_2(v))$, which implies that $g_v(c_2) = c_2(v)$.
 But then the $g_w$'s are a winning strategy for the ordinary hat guessing game on $G$ using more than $q = \HG(G)$ colors, which is a contradiction.
\end{proof}

The upper bound in Proposition~\ref{Lem:BoundFromOrdinaryHG} can give good bounds on $\HGP(G)$ for various graphs with bounded chromatic numbers. 
In \cite{Knierim2023}, several classes of graphs with bounded chromatic number have been shown to have bounded ordinary hat guessing number.
Our result implies that these graphs should also have a bounded proper hat guessing number. 
In particular, if $G$ is an outerplanar graph, then we can conclude from \cite[Theorem 1.4]{Knierim2023} and Proposition~\ref{Lem:BoundFromOrdinaryHG} that $\HGP(G) \leq 3\cdot41 - 1= 122.$

\bigskip 

We conclude this section by proving Theorem~\ref{thm:HGPlinear}.
We need a Lopsided Lov\'asz Local
Lemma, which is a variant of the usual Lov\'asz Local Lemma that relaxes the conditions of independence. Erd\H{o}s and Spencer~\cite{ErdosSpencer} proved one of the first versions, in terms of positive correlations.
Here, we use the following symmetric form given in
Knuth~\cite[p.~82, Section 7.2.2.2, Lemma~L and Theorem~J]{KnuthBook}, which uses a direct estimate on conditional probabilities as input. This variant can also be derived from more general work of Scott and Sokal~\cite[Theorem~4.1, Definition~2.14, and Corollary~5.7]{ScottSokal}.

\begin{Lemma}[Symmetric Lopsided Lov\'{a}sz Local Lemma]
\label{lem:LLLL}
Let $B_1,\dots,B_m$ be bad events in a probability space, and let $D$ be a
graph on $[m]$ of maximum degree at most $\Delta\ge 2$. Let $p\le \frac{(\Delta-1)^{\Delta-1}}{\Delta^\Delta}$. Suppose that, for every
$i\in[m]$ and every set
$S\subseteq [m]\setminus N_D[i]$
for which $\Prob(\bigcap_{j\in S}\overline{B_j})>0$, we have
$\Prob\left( B_i\,\middle|\,\bigcap_{j\in S}\overline{B_j} \right)\le p.$
Then
$\Prob\!\left(\bigcap_{i=1}^m\overline{B_i}\right)>0.$
\end{Lemma}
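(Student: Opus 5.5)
We prove the statement directly with the standard inductive argument behind the local lemma, refined so that it yields the constant $\frac{(\Delta-1)^{\Delta-1}}{\Delta^\Delta}$ instead of the weaker $\frac{\Delta^\Delta}{(\Delta+1)^{\Delta+1}}$. For $S\subseteq[m]$ write $A_S=\bigcap_{j\in S}\overline{B_j}$ and set $x=1/\Delta$. Then $x(1-x)^{\Delta-1}=\frac{(\Delta-1)^{\Delta-1}}{\Delta^\Delta}$, so the hypothesis reads $p\le x(1-x)^{\Delta-1}$.

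\textbf{Key claim.} For every $i$ and every $S\subseteq[m]\setminus\{i\}$ with $\Prob(A_S)>0$ and $|S\cap N_D(i)|\le \Delta-1$, we have $\Prob(B_i\mid A_S)\le x$.

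We prove the claim by induction on $|S|$. Split $S=S_1\cup S_2$ with $S_1=S\cap N_D(i)$ and $S_2=S\setminus N_D[i]$. Then
\[
\Prob(B_i\mid A_S)\le \frac{\Prob(B_i\mid A_{S_2})}{\Prob(A_{S_1}\mid A_{S_2})}.
\]
The numerator is at most $p$ by the hypothesis of the lemma. For the denominator, list $S_1=\{j_1,\dots,j_r\}$ and write it as $\prod_t\bigl(1-\Prob(B_{j_t}\mid A_{S_2\cup\{j_1,\dots,j_{t-1}\}})\bigr)$. The point of the refinement is that each $j_t$ is adjacent to $i$, and $i$ is not in the conditioning set. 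Hence at most $\Delta-1$ neighbours of $j_t$ are conditioned on, and the induction hypothesis applies, since the conditioning set is smaller than $S$. Each factor is therefore at least $1-x$, and as $r\le\Delta-1$ we get $\Prob(B_i\mid A_S)\le p/(1-x)^{\Delta-1}\le x$. Positivity of the intermediate conditioning events follows from $\Prob(A_S)>0$.

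\textbf{Conclusion.} We write $\Prob(A_{[m]})=\prod_k\bigl(1-\Prob(B_{i_k}\mid A_{\{i_1,\dots,i_{k-1}\}})\bigr)$ for a suitable ordering $i_1,\dots,i_m$. In each component of $D$, choose the ordering as the reverse of a BFS order from a root. Then every non-root vertex has its BFS parent later in the ordering, so it has at most $\Delta-1$ earlier neighbours. By the claim its factor is at least $1-1/\Delta>0$, and inductively $\Prob(A_{\{i_1,\dots,i_k\}})>0$. For the root of each component, which may have $\Delta$ earlier neighbours, the same splitting gives the bound $p/(1-x)^{\Delta}\le 1/(\Delta-1)$. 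This is strictly less than $1$ when $\Delta\ge3$. If the component has a vertex of degree less than $\Delta$, choose it as the root and the claim applies directly.

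\textbf{Main obstacle.} The remaining case is $\Delta=2$ with a component that is a cycle, where the crude bound is exactly $1$. Here I would try two things. First, I would sharpen the denominator for the root: its two neighbours are conditioned on sets that also exclude each other, so one might gain strictly over $(1-x)^2$. Failing that, I would use the known fact that the Shearer threshold of a finite cycle is strictly larger than $1/4$, the threshold of the infinite path, and redo the induction with an $x$ slightly adapted to the cycle length. Alternatively, following the paper, one may simply invoke Knuth's Lemma~L and Theorem~J, which cover exactly this strict positivity.
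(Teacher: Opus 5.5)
Your induction is sound. For $\Delta\ge 3$, and for $\Delta=2$ when $D$ has no cycle component, it gives a complete elementary proof; the paper itself only cites Knuth's Lemma~L and Theorem~J. The gap is the case you flag: $\Delta=2$ with a cycle component, where the root bound $p(1-x)^{-2}$ equals $1$ at $p=\frac14$.

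This case is not marginal. It is exactly how the lemma is used in Corollary~\ref{cor:cycles}, where $q=6$, $p=1/(q-\Delta)=\frac14$ and $D$ is the cycle itself. None of your suggested repairs works as stated:
\begin{itemize}
\item A single $x$ adapted to the cycle length cannot close the induction. The step needs $p\le x(1-x)$, and $x(1-x)\le\frac14$ with equality only at $x=\frac12$, which gives back the root bound $1$.
\item The observation that the root's two neighbours exclude each other gains nothing on $C_n$ for $n\ge 4$. There those neighbours are non-adjacent, and each still has one conditioned neighbour, so each factor is still only bounded by $1-\frac12$.
\item Invoking Knuth is a citation, not a proof.
\end{itemize}

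The missing idea is to let the bound depend on $|S|$, which exploits finiteness. Put $y_0=p$ and $y_k=p(1-y_{k-1})^{-(\Delta-1)}$.
\begin{itemize}
\item The map $y\mapsto p(1-y)^{-(\Delta-1)}$ is increasing. Also $p<1/\Delta$ and $p(1-1/\Delta)^{-(\Delta-1)}\le 1/\Delta$. Hence the sequence is nondecreasing and $y_k<1/\Delta$ for every $k$.
\item Your argument, unchanged, then proves $\Prob(B_i\mid A_S)\le y_{|S|}$ whenever $|S\cap N_D(i)|\le \Delta-1$. This works because each conditioning set $S_2\cup\{j_1,\dots,j_{t-1}\}$ has size at most $|S|-1$.
\item Now take a vertex $r$ all of whose $\Delta$ neighbours lie in the earlier set $T$. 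Each neighbour is conditioned on a set of size at most $m-2$. So $\Prob(B_r\mid A_T)\le p(1-y_{m-2})^{-\Delta}$.
\item For $p>0$ this is strictly less than $p(1-1/\Delta)^{-\Delta}\le \frac{1}{\Delta-1}\le 1$; for $p=0$ it is $0$. This holds for every $\Delta\ge 2$.
\end{itemize}
This settles the cycle case. It also makes your BFS ordering unnecessary: any ordering of $[m]$ works.
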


{\scshape Theorem~\ref{thm:HGPlinear}.}
{\it Let $G$ be a graph of maximum degree $\Delta\ge 2$.  Then}
$$ \HGP(G) < \Delta+ \left\lceil \frac{\Delta^\Delta}{(\Delta-1)^{\Delta-1}}\right\rceil  <(e+1)\Delta.$$

\begin{proof}
Fix the integer $q = \Delta+ \left\lceil \frac{\Delta^\Delta}{(\Delta-1)^{\Delta-1}}\right\rceil$ 
and fix an arbitrary guessing strategy
$f_v:[q]^{N_G(v)}\longrightarrow [q]$ for the vertices $v\in V(G)$.
We shall prove that this strategy is not winning.
Since $q>\Delta$ there exists at least one  proper $q$-coloring, so we may choose a coloring $\Gamma$ uniformly at random from the set of proper $q$-colorings of $G$.
For each vertex $v$, let
$$B_v:= \left\{\Gamma(v)=f_v\bigl(\Gamma|_{N_G(v)}\bigr)\right\}$$
be the bad event that $v$ guesses correctly. Note that the strategy $(f_v)_{v\in V(G)}$ is not winning if we can show that there exists a coloring that avoids all bad events. To that end, we verify the hypothesis of Lemma~\ref{lem:LLLL} with the graph $G$
on the bad events.  Fix $v\in V(G)$ and a set $S\subseteq V(G)\setminus N_G[v]$.
Define the conditioning event
$$\mathcal E_S:=\bigcap_{u\in S}\overline{B_u}$$
that all vertices in $S$ guess wrong, and suppose that $\Prob(\mathcal E_S)>0$.  Let

$$C:=\Gamma|_{V(G)\setminus\{v\}}$$
be the coloring outside $v$.  If $u\in S$, then $u\ne v$ and $uv\notin E(G)$.
Consequently, the event $B_u$ depends only on $\Gamma(u)$ and on the colors in
$N_G(u)$, none of which is $\Gamma(v)$.  Thus every $B_u$ with $u\in S$, and hence
also $\mathcal E_S$, is completely determined by $C$.

Now fix an outside coloring $c$ with $\Prob(C=c)>0$.  Write
$s(c):=\bigl|c(N_G(v))\bigr|$
for the number of distinct colors appearing on the neighbors of $v$.
Conditional on $C=c$, the color $\Gamma(v)$ is uniform on
$
  L_c:=[q]\setminus c(N_G(v)),
$
which has size $q-s(c)$.  Indeed, the proper colorings extending $c$ are
exactly the colorings obtained by assigning to $v$ one color from $L_c$;
each such choice gives exactly one extension, and all proper $q$-colorings
have the same probability.  Since the guess $g_c:=f_v\bigl(c|_{N_G(v)}\bigr)$
is fixed once $c$ is fixed, it follows that
$\Prob(B_v\mid C=c)$ equals $\dfrac{1}{q-s(c)}$ if $g_c\in L_c$, and equals $0$ otherwise.
Therefore
\begin{equation*}
\label{eq:BadeventGivenFixedOutsideColoring}
  \Prob(B_v\mid C=c) \le \frac{1}{q-s(c)} \le \frac{1}{q-d_G(v)}.
\end{equation*}

Averaging this inequality over all outside colorings $c$ yields $\Prob(B_v\mid \mathcal E_S) \le\frac{1}{q-d_G(v)}$, as we now make precise: Let
  $\mathcal G_S :=
  \{c:\Prob(C=c,\mathcal E_S)>0\}$.
Because $\mathcal E_S$ is determined by $C$, every
$c\in\mathcal G_S$ satisfies $\Prob(\mathcal E_S\mid C=c)=1$.
Thus, once $C=c$ is known, conditioning additionally on $\mathcal E_S$
adds no further information.  More explicitly,
$
  \Prob(B_v\mid C=c,\mathcal E_S)
  =
  \frac{\Prob(B_v\cap\mathcal E_S\mid C=c)}
       {\Prob(\mathcal E_S\mid C=c)}
  =\Prob(B_v\mid C=c),
$
because under the condition $C=c$ the event $\mathcal E_S$ occurs with
probability one.  The law of total probability now gives
\begin{align*}
  \Prob(B_v\mid \mathcal E_S)
  &= \sum_{c\in\mathcal G_S}
     \Prob(B_v\mid C=c,\mathcal E_S)\,
     \Prob(C=c\mid \mathcal E_S)\\
  &=\sum_{c\in\mathcal G_S}
     \Prob(B_v\mid C=c)\,
     \Prob(C=c\mid \mathcal E_S)\\
  &\le\sum_{c\in\mathcal G_S}
     \frac{1}{q-d_G(v)} \,
     \Prob(C=c\mid \mathcal E_S)\\
  &=\frac{1}{q-d_G(v)}.
\end{align*}

In particular, $\Prob(B_v\mid \mathcal E_S)
  \le \frac{1}{q-d_G(v)}
  \le \frac{1}{q-\Delta}$.
This holds for every $v$ and every
$S\subseteq V(G)\setminus N_G[v]$ for which the conditioning event has
positive probability.  Hence the bad events satisfy the hypothesis of Lemma~\ref{lem:LLLL}, with the
graph $G$.  By our choice of $q$ we have $\frac{1}{q-\Delta}
  \le
  \frac{(\Delta-1)^{\Delta-1}}{\Delta^\Delta}$, so 
Lemma~\ref{lem:LLLL}  yields
$$\Prob \left(\bigcap_{v\in V(G)}\overline{B_v}\right)>0.$$
Thus against the strategy $(f_v)_{v\in V(G)}$ that was fixed at the beginning, there exists a proper $q$-coloring on which every vertex guesses
incorrectly.  Since the strategy was arbitrary, no winning strategy exists
with $q$ colors, and therefore $\HGP(G)<q$. 
Using that $ \frac{\Delta^\Delta}{(\Delta-1)^{\Delta-1}}
  = \Delta\left(1+\frac{1}{\Delta-1}\right)^{\Delta-1}  <e\Delta$,
it follows that $\HGP(G)<\Delta+e\Delta$.
\end{proof}

\begin{Corollary}\label{cor:cycles}
For every cycle $C$, 
$$4\le \HGP(C) \le 5.$$
\end{Corollary}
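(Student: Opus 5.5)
The plan is to get the upper bound from the general results already proved and the lower bound from the paper's result on trees.

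For the upper bound, apply Theorem~\ref{thm:HGPlinear} with $\Delta = 2$. Then
\[
\frac{\Delta^\Delta}{(\Delta-1)^{\Delta-1}} = \frac{4}{1} = 4,
\]
so $\HGP(C) < 2 + 4 = 6$, that is, $\HGP(C) \le 5$. As a cross-check, Lemma~\ref{Lem:ChromaticNumber} gives the same bound for $C_3 = K_3$, since $n+\chi-1 = 5$ there. That value also agrees with Theorem~\ref{Thm:CompleteGraphs}, which shows the upper bound cannot be improved uniformly for all cycles.

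For the lower bound, note that every cycle $C$ has at least $3$ vertices. It therefore contains a path on $3$ vertices as a subgraph, and that path is the unique tree on $3$ vertices. By Theorem~\ref{Thm:Trees} (or its base case, which the introduction says is proved directly), this path has proper hat guessing number $4$. Lemma~\ref{Lem:Subgraphs} then gives $\HGP(C) \ge 4$.

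The only real obstacle is ordering. If Theorem~\ref{Thm:Trees} is proved later in the paper, I would rely on the self-contained base case $\HGP(P_3) = 4$. Failing that, I would exhibit an explicit winning strategy on $P_3 = a$--$b$--$c$ with $4$ colors. The natural first attempt is to let the centre $b$, seeing the two distinct-or-equal colors on $a$ and $c$, guess via a fixed rule. Meanwhile $a$ and $c$ guess as functions of $b$'s color so as to cover the remaining cases, and one checks all proper colorings by a short case analysis. Everything else is immediate from the earlier results.
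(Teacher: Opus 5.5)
Your proposal is correct and matches the paper's proof. The upper bound is Theorem~\ref{thm:HGPlinear} with $\Delta=2$ (giving $\HGP(C)<6$), and the lower bound comes from $P_3\subseteq C$, $\HGP(P_3)=4$ (Lemma~\ref{Lem:P3}, the base case of Theorem~\ref{Thm:Trees}) and Lemma~\ref{Lem:Subgraphs}; your ordering worry and fallback sketch are unnecessary, since Lemma~\ref{Lem:P3} does not depend on anything about cycles.
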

\begin{proof}
The lower bound follows because the path on $3$ vertices is a subgraph of $C$, and has proper hat guessing number $4$ due to Theorem~ \ref{Thm:Trees}. The upper bound follows from Theorem~\ref{thm:HGPlinear} with $\Delta=2$.
\end{proof}

\section{Complete graphs}
\label{sec:complete_graphs}

In this section, we determine the proper hat guessing number of the complete graph $K_n$. 
Whilst a linear strategy produces a winning strategy for the classical hat guessing game, 
the proper variant requires a more combinatorial approach. 
Our method is based on the 
construction of regular bipartite graphs and using the existence of a perfect 
matching to generate a guessing strategy. 
We will use the existing literature on perfect matchings between the middle layers of the Boolean lattice to give explicit constructions of winning strategies.
\newline

In the proper variant game on $K_n$, the hat configuration visible from any vertex can be interpreted as a string of length $n-1$, whose entries are all distinct elements of the set $[q]$ of colors.
We design a guessing strategy for $K_n$ with $2n-1$ colors based on a simple observation. Let $[2n-1]$ be the set of colors. Let $\mathcal{S}_{r}$ denote the set of all strings of length $r$ consisting of distinct symbols from $[2n-1]$. The cardinality of both $\mathcal{S}_n$ and $[n] \times \mathcal{S}_{n-1}$ is $\frac{(2n-1)!}{(n-1)!}$.
The following lemma provides a structure for our guessing strategy.

\begin{Lemma}
 \label{Lm:Perfect matching}
For all $n \geq 1$, there exists a bijection $f : \mathcal{S}_n \rightarrow [n] \times \mathcal{S}_{n-1}$ such that for a string $S \in \mathcal{S}_n$, if $f(S) = (r, S^*)$, then $S^*$ is obtained by deleting the $r$\th entry of $S$. 
\end{Lemma}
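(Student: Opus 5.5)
We view the statement as the existence of a perfect matching in the bipartite graph $H$ with parts $\mathcal{S}_n$ and $[n]\times\mathcal{S}_{n-1}$, where $S\in\mathcal{S}_n$ is joined to $(r,S^*)$ exactly when $S^*$ is obtained from $S$ by deleting its $r$\th entry. A bijection $f$ as in the statement is precisely a perfect matching of $H$. The natural tool is Hall's theorem for regular bipartite graphs: a $d$-regular bipartite graph with $d\ge 1$ has a perfect matching. So the plan is to show that $H$ is regular, which also recovers the equality $|\mathcal{S}_n| = |[n]\times\mathcal{S}_{n-1}| = \frac{(2n-1)!}{(n-1)!}$ noted before the lemma.

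First we compute degrees. A string $S\in\mathcal{S}_n$ has exactly $n$ neighbours, namely $(r, S\setminus r\text{\th entry})$ for $r\in[n]$; these are distinct because their first coordinates differ. A pair $(r,S^*)$ with $S^*\in\mathcal{S}_{n-1}$ has as neighbours the strings obtained by inserting a new symbol at position $r$ of $S^*$. This symbol must be distinct from the $n-1$ symbols of $S^*$, so there are $(2n-1)-(n-1)=n$ choices, each giving a distinct string. Hence $H$ is $n$-regular.

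Next we apply the standard fact that a finite $n$-regular bipartite graph with $n\ge1$ has a perfect matching. This follows from Hall's condition: for any set $X$ on one side, the $n|X|$ edges leaving $X$ land in $N(X)$, and each vertex of $N(X)$ absorbs at most $n$ of them, so $|N(X)|\ge|X|$. The resulting matching defines $f$. The case $n=1$ is trivial but is also covered by this argument.

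There is no serious obstacle. The only point requiring care is the count on the right-hand side, where the alphabet size $2n-1$ is exactly what makes the degree equal to $n$. Regularity is what guarantees the perfect matching; mere equality of the part sizes would not suffice. If an explicit bijection were wanted instead, as the section's introduction hints, one could try to lift known explicit matchings between the middle layers of the Boolean lattice on $[2n-1]$, such as the lexical or modular matchings. One would match the underlying sets and then transport the orderings coherently. This is harder to verify, so for the existence statement we rely on Hall's theorem.
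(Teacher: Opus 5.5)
Your proposal is correct and matches the paper's proof. The paper likewise builds the bipartite graph between $\mathcal{S}_n$ and $[n]\times\mathcal{S}_{n-1}$ by deleting the $r$-th entry, checks that it is $n$-regular (the alphabet size $2n-1$ gives exactly $n$ insertion choices), and invokes Hall's theorem to obtain a perfect matching; your explicit check of Hall's condition for regular bipartite graphs just spells out the step the paper cites.
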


\begin{proof}

    Let $G = (L \sqcup R, E)$ be a bipartite graph, where $L$ consists of the strings in $\mathcal{S}_n$, $R$ consists of the elements of $[n] \times \mathcal{S}_{n-1}$, and there exists an edge between two vertices $S \in L$ and $(r, S^*) \in R$ if deleting the $r$\th entry $S_r$ of $S$ from the left yields the string $S^*$. We recall that $|L| = |R|$.
\newline

    For every $S \in L$, the degree of $S$ in $G$ is $n$, corresponding to the $n$ entries in $S$ that can be deleted to yield a string in $\mathcal{S}_{n-1}$. Similarly, the degree of each $(r, S^*) \in R$ is $n$, corresponding to the $n$ remaining entries in $[2n-1]$ that could have been deleted as the $r$\th entry of an element in $\mathcal{S}_n$ to form $S^*$. Thus, $G$ is an $n$-regular bipartite graph. By Hall's Matching Theorem, there exists a perfect matching $\mathcal{M} \subseteq E$.  
\newline

    To each $S \in \mathcal{S}_n$, letting $f(S)$ be the unique vertex in $R$ adjacent to $S$ in $\mathcal{M}$ yields a function that satisfies the desired properties. 
\end{proof}

{\scshape Theorem \ref{Thm:CompleteGraphs}.}
 {\it For every integer $n \geq 2$,
 \[\HGP(K_n) = 2n - 1.\]}

\begin{proof}
First note that $\HGP(K_n) \leq n + \chi(K_n) - 1 = 2n-1$ by Lemma~\ref{Lem:ChromaticNumber}.
We now prove that $\HGP(K_n) \geq 2n-1$ by showing that a winning strategy with $2n - 1$ colors exists. 

Let the vertices of $K_n$ be numbered from $1$ to $n$.
We identify a coloring $C$ of $K_n$ with the $n$-tuple $(c_1,\dots,c_n)$ where $c_i$ is the color of vertex $i$.
Let $f$ be the function defined in Lemma~\ref{Lm:Perfect matching}.
We use the following guessing strategy for each vertex $i$: if $i$ sees colors $C^*_i = (c_1,\dots,c_{i-1},c_{i+1},\dots,c_n)$ on its neighbors, $i$ guesses its hat color to be the $i$\textsuperscript{th} entry of $f^{-1}((i,C^*_i))$.

Now suppose that $K_n$ is colored using $C$, and consider $f(C) = (j, C^*_j)$.
Then $j$ guesses the $j$\textsuperscript{th} entry of $f^{-1}(j,C^*_j) = f^{-1}(f(C)) = C$, which is $c_j$, hence $j$ guesses correctly.
Therefore, our strategy is winning.
\end{proof}

The proof of Theorem \ref{Thm:CompleteGraphs} is non-constructive, since we use Hall's Matching Theorem to ensure that the graph $G$ from the proof of Lemma \ref{Lm:Perfect matching} has a perfect matching.
We can however give explicit constructions of perfect matchings for $G$.
Let $\binom{[n]}k$ denote the set of $k$-element subsets of $[n] = \{1,\dots,n\}$.
Consider the bipartite graph $H$ whose vertex classes are $\binom{[2n-1]}{n-1}$ and $\binom{[2n-1]}n$, where two sets $A$ and $B$ are adjacent if and only if $A \subset B$ or $B \subset A$.
Consider the maps
\begin{align*}
 f_L:& \mathcal S_n \to \binom{[2n-1]}{n}: (c_1,\dots,c_n) \mapsto \{c_1,\dots,c_n\}, \\
 f_R:& [n] \times \mathcal S_{n-1} \to \binom{[2n-1]}{n-1}: (i,(c_1,\dots,c_{n-1})) \mapsto \{c_1,\dots,c_{n-1}\}.
\end{align*}

This defines a map $f$ from the vertices of $G$ to the vertices of $H$ with $f(v) = f_L(v)$ if $v \in \mathcal S_n$ and $f(v) = f_R(v)$ if $v \in [n] \times \mathcal S_{n-1}$.

\begin{Lemma}
 The pre-image of a perfect matching in $H$ is a perfect matching in $G$.
\end{Lemma}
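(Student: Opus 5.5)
The plan is to interpret the ``pre-image of a perfect matching $M$ in $H$'' as the set $f^{-1}(M)$ of all edges $\{S,(r,S^*)\}$ of $G$ with $\{f_L(S), f_R(r,S^*)\}\in M$. First I will check that $f$ is a graph homomorphism $G\to H$. If $S=(c_1,\dots,c_n)$ is adjacent in $G$ to $(r,S^*)$, then $S^*$ is $S$ with $c_r$ deleted. Hence $f_R(r,S^*)=\{c_1,\dots,c_n\}\setminus\{c_r\}\subset f_L(S)$, so $f$ maps every edge of $G$ to an edge of $H$. This shows that $f^{-1}(M)$ is a well-defined set of edges of $G$.

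The key step is a local analysis over each matched pair. Fix an edge $\{A,B\}\in M$ with $A\in\binom{[2n-1]}{n-1}$, $B\in\binom{[2n-1]}{n}$ and $B=A\cup\{x\}$. I will show that the edges of $G$ between the fibres $f_L^{-1}(B)$ and $f_R^{-1}(A)$ form a perfect matching between these two fibres.
\begin{itemize}
\item Take $S\in f_L^{-1}(B)$, a string of distinct symbols whose set is $B$. The symbol $x$ occurs in $S$ at a unique position $r$. A neighbour $(r',S')$ of $S$ lies in $f_R^{-1}(A)$ exactly when deleting the $r'$-th entry of $S$ leaves the set $A$, i.e.\ exactly when that entry is $x$. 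So $S$ has exactly one such neighbour, namely $(r, S\text{ with }x\text{ removed})$.
\item Conversely, take $(r,S^*)\in f_R^{-1}(A)$. Its neighbours in $G$ are the strings obtained by inserting some symbol $y\notin A$ at position $r$. Such a string lies in $f_L^{-1}(B)$ iff $y=x$, so there is exactly one such neighbour.
\end{itemize}
Thus the induced subgraph of $G$ on the two fibres is $1$-regular, i.e.\ a perfect matching; as a consistency check, both fibres have size $n!$.

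Finally, I will assemble these pieces. The fibres $f^{-1}(v)$, $v\in V(H)$, partition $V(G)$, because $f_L$ and $f_R$ are defined on all of $\mathcal S_n$ and $[n]\times\mathcal S_{n-1}$. Since $M$ covers every vertex of $H$ exactly once, every vertex of $G$ lies in exactly one pair of fibres $f^{-1}(A)\cup f^{-1}(B)$ with $\{A,B\}\in M$. Hence $f^{-1}(M)$ is the disjoint union of the local perfect matchings above, and every vertex of $G$ is covered exactly once, which is what we want.

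I expect no real obstacle; the only care needed is in the degree count for the local matching. One must see that, among the $n$ neighbours of a vertex of $G$, exactly one lands in the partner fibre. This is where the condition $B=A\cup\{x\}$ with a single extra symbol $x$ is used.
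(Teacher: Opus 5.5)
Your proposal is correct and follows essentially the same route as the paper. Both check that $f$ is a homomorphism, then show that every vertex in a fibre over a matched pair $\{A,B\}\in M$ has exactly one neighbour in the partner fibre, so $f^{-1}(M)$ covers each vertex of $G$ exactly once; you locate that neighbour via the position of the extra symbol $x$, the paper via $\sigma^{-1}(n)$.
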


\begin{proof}
 An edge of $G$ is of the form
 \[
  \{(c_1,\dots,c_n), (i,(c_1,\dots,c_{i-1},c_{i+1},\dots,c_n))\}
 \]
 and $f$ maps it to the edge
 \[
  \{\{c_1,\dots,c_n\}, \{c_1,\dots,c_n\} \setminus \{c_i\}\}
 \]
 of $H$.
 This proves that $f$ is a graph homomorphism from $G$ to $H$.

 Now suppose that $\mathcal M$ is a perfect matching in $H$.
 Take a vertex $v$ in $G$.
 Then there is a unique edge $e \in \mathcal M$ that covers $f(v)$, and $e = \{A,B\}$ for some $A = \{c_1,\dots,c_n\}$ and $B = A \setminus \{c_n\}$.
 Thus,
 \[
  f_L^{-1}(A) = \{ (c_{\sigma(1)}, \dots, c_{\sigma(n)}) \: : \: \sigma \in \Sym(n) \},
 \]
 where $\Sym(n)$ denotes the symmetric group on $[n]$.
 Any element $(c_{\sigma(1)}, \dots, c_{\sigma(n)}) \in f_L^{-1}(A)$ has a unique neighbor in $f_R^{-1}(B)$, namely
 \[
  (i,(c_{\sigma(1)}, \dots, c_{\sigma(i-1)}, c_{\sigma(i+1)}, \dots, c_{\sigma(n)} )),
 \]
 where $i = \sigma^{-1}(n)$.
 Similarly, every element of $f_R^{-1}(B)$ has a unique neighbor in $f_L^{-1}(A)$.
 Hence, there is a unique edge of $G$ in $f^{-1}(e)$ that covers $v$.
 This proves that $f^{-1}(\mathcal M)$ is a perfect matching of $G$.
\end{proof}

Thus, any explicit perfect matching of $H$ gives rise to an explicit perfect matching of $G$, and hence an explicit winning strategy for the proper hat guessing game on $K_n$.
There are several constructions of perfect matchings of $H$, see e.g.\ \cite{Kierstead1988} for a direct construction, and \cite{Greene1976} for a construction induced by symmetric chain decompositions of the Boolean lattice.
We will describe one construction from Greene and Kleitman \cite{Greene1976}.

Let $q = 2n-1$ and let $[q] = \{1, 2, \dots, q\}$ be the set of possible colors on the graph $K_n$. 
For a given vertex $v_i$, assign a right parenthesis to each of the colors in $[q]$ which is the coloring of a neighbor of $v_i$.
Assign a left parenthesis to each of the remaining colors, and arrange them in order.
The strategy of $v_i$ is to guess the color corresponding to this left-most unmatched left parenthesis.
For example, if $n=4$, $q=7$, and the colors of the neighbors of $v_i$ are 2, 5, and 7, then we have:
\begin{align*}
 &\texttt{1 2 3 4 5 6 7}\\
 &\texttt{( ) ( ( ) ( )}
\end{align*}
Thus, in this case, $v_i$ will guess color $3$.

\section{Trees}
\label{sec:trees}

In this section, we will determine the proper hat guessing number for any arbitrary tree. 
In \cite{Alon2020} it was shown that the ordinary hat guessing number of a tree on at least $2$ vertices is $2$, which by Proposition~\ref{Lem:BoundFromOrdinaryHG} implies that the proper hat guessing number is at most $5$. 
We will prove that for all trees on at least $3$ vertices, the proper hat guessing number is in fact equal to $4$.

A tree on $2$ vertices is just an edge, and hence by Theorem~\ref{Thm:CompleteGraphs} it has proper hat guessing number equal to $3$.
We will restrict our consideration to trees containing at least $3$ vertices.
Note that up to isomorphism, there is a unique tree on exactly $3$ vertices.
We first determine its proper hat guessing number. 

\begin{Lemma}\label{Lem:P3}
 For the path $P_3$ on $2$ edges and $3$ vertices, $\HGP(P_3) = 4$.
\end{Lemma}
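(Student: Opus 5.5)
The upper bound is immediate from Lemma~\ref{Lem:ChromaticNumber}: $P_3$ has $3$ vertices and chromatic number $2$, so $\HGP(P_3)\le 3+2-1=4$. The real work is the lower bound. For it I will write down an explicit winning strategy with $4$ colors, and I will take the color set to be $\mathbb{Z}_4$ so that the guesses can be given by simple arithmetic.

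Label the path $a-b-c$ and write a proper coloring as $(x,y,z)$ for the colors of $a,b,c$, so that $x\ne y$ and $z\ne y$. The endpoints only see $y$, so their strategies are functions $g,h:\mathbb{Z}_4\to\mathbb{Z}_4$. I will choose $g(y)=y+1$ for $a$ and $h(y)=y+2$ for $c$; both are automatically different from the color $y$ that is seen.

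Now fix $y$. The pairs $(x,z)$ for which neither endpoint guesses correctly are exactly those with $x\in\{y+2,y+3\}$ and $z\in\{y+1,y+3\}$, so there are four of them. Over the four values of $y$ this gives $16$ uncovered triples. The middle vertex $b$ sees only $(x,z)$, so the strategy is winning if and only if each ordered pair $(x,z)\in\mathbb{Z}_4^2$ is uncovered for at most one $y$. Since there are exactly $16$ ordered pairs, this amounts to the uncovered pairs partitioning $\mathbb{Z}_4^2$.

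To verify this I will look at the difference $d=z-x$ in each uncovered case:
\begin{itemize}
\item $(x,z)=(y+2,y+1)$ gives $d=-1$;
\item $(x,z)=(y+2,y+3)$ gives $d=1$;
\item $(x,z)=(y+3,y+1)$ gives $d=2$;
\item $(x,z)=(y+3,y+3)$ gives $d=0$.
\end{itemize}
So $d$ determines whether $x=y+2$ or $x=y+3$, and therefore determines $y$. This suggests the strategy for $b$: guess $x-2$ if $z-x\in\{1,-1\}$, and guess $x-3$ if $z-x\in\{0,2\}$. Then in every proper coloring that both endpoints get wrong, $b$ guesses $y$ correctly.

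The only real obstacle is finding endpoint functions $g,h$ whose uncovered sets tile $\mathbb{Z}_4^2$ exactly. The counting ($4\cdot 4=16=|\mathbb{Z}_4^2|$) shows there is no slack at all, which is why I would search among translations $y\mapsto y+s$ with distinct shifts. Once the difference invariant above is noticed, the verification is routine.
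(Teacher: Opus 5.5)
Your proof is correct and follows essentially the paper's route. The upper bound comes from Lemma~\ref{Lem:ChromaticNumber}, and the lower bound from an explicit $4$-color strategy in which each endpoint guesses a fixed translate of the middle color and the middle vertex recovers its own color from the fact that both endpoints were wrong.

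The only difference is the group of colors. The paper uses $\mathbb{Z}_2\times\mathbb{Z}_2$, with one endpoint shifting the middle color in the first coordinate and the other in the second. Each wrong endpoint then pins down one coordinate of the middle color, so the middle vertex reads off its guess coordinate-wise with no case analysis. Your $\mathbb{Z}_4$ version instead needs the difference invariant $z-x$ and a four-case check.
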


\begin{proof}
 Let $P_3 = (\{v_1, v_2, v_3\},\{\{v_1,v_2\},\{v_2,v_3\}\})$.
 Using Lemma \ref{Lem:ChromaticNumber}, we see that $\HGP(P_3) \le 3 + \chi(P_3) - 1 = 4$.
 Now, we present a winning $4$-color guessing strategy for $P_3$.
 Here, we will let the 4 colors be denoted as the elements in $(\mathbb{Z} / 2\mathbb{Z}) \times (\mathbb{Z} / 2 \mathbb{Z})$.
 Define
 \begin{align*}
  f_{v_1}((x_2, y_2)) = (x_2 + 1, y_2), &&
  f_{v_2}((x_1, y_1), (x_3, y_3)) = (x_3 + 1, y_1 + 1), &&
  f_{v_3}((x_2, y_2)) = (x_2, y_2 + 1),
 \end{align*}
 where $(x_i, y_i)$ is the color assigned to vertex $v_i$ for each $i \in  \{1, 2, 3\}$.
 This strategy is easily checked to be winning.
 Suppose that $v_1$ and $v_3$ both guess incorrectly.
 Then $(x_1,y_1)$ must be distinct from $(x_2,y_2)$ since the coloring is proper, and distinct from $(x_2+1,y_2)$, since $v_1$ guesses incorrectly.
 Hence, $y_2 = y_1 + 1$.
 Similarly, $x_2 = x_3 + 1$.
 We see that $v_2$ guesses correctly.
\end{proof}

In \cite[Theorem 1.8]{Alon2020}, the authors proved that if $G$ is a graph with $\HG(G) \geq 3$, then we can remove vertices of degree $1$ without altering the hat guessing number.
We will prove a similar result for the proper hat guessing number.

\begin{Lemma}\label{Lem:LeafRemoval}
 If $G$ is a graph with $\HGP(G) \geq 5$, and $v$ is a vertex of $G$ of degree 1, then $\HGP(G-v) = \HGP(G)$.
\end{Lemma}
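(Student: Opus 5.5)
The inequality $\HGP(G-v) \leq \HGP(G)$ follows from Lemma~\ref{Lem:Subgraphs}, since $G-v$ is a subgraph of $G$. For the reverse inequality, let $q=\HGP(G)\ge 5$ and fix a winning strategy $\{f_w : w\in V(G)\}$ for the proper game on $G$ with $q$ colors. From it I will build a winning strategy on $G-v$ with $q$ colors. Let $u$ be the unique neighbor of $v$. Every vertex $w\neq u$ of $G-v$ keeps its strategy $f_w$. This is legitimate because $v$ is adjacent only to $u$, so such a $w$ does not see $v$ and its view is the same in $G$ and in $G-v$. The guess of $v$ depends only on the color of $u$; write $g(a)=f_v(a)$ for the guess of $v$ when $u$ has color $a$.

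\textbf{The new strategy for $u$.} Fix a view $w$ of $u$ in $G-v$, i.e.\ a coloring of $N_G(u)\setminus\{v\}$. In $G$, the guess of $u$ also depends on the color $x$ of $v$, via $x\mapsto f_u(w,x)$. Call a color $a$ \emph{forced} for $w$ if
\[ f_u(w,x)=a \quad \text{for every } x\in[q]\setminus\{a,g(a)\}. \]
Such an $a$ has at least $q-2$ preimages under $x\mapsto f_u(w,x)$. If there were two distinct forced colors $a$ and $a'$, their preimage sets would be disjoint, giving $2(q-2)\le q$, that is $q\le 4$. This contradicts $q\ge 5$, so each view $w$ has at most one forced color. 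In $G-v$, vertex $u$ guesses the forced color of its view if one exists, and an arbitrary color otherwise.

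\textbf{Verification.} Take any proper coloring $c'$ of $G-v$ and suppose $u$ guesses wrong. Then $a=c'(u)$ is not forced for the view $w$ of $u$. Hence there is a color $x\notin\{a,g(a)\}$ with $f_u(w,x)\neq a$. Extend $c'$ by giving $v$ the color $x$.
\begin{itemize}
\item The extension is proper, since $x\neq a$ and $u$ is the only neighbor of $v$.
\item Vertex $v$ guesses wrong, since $x\neq g(a)$.
\item Vertex $u$ guesses wrong in $G$, since $f_u(w,x)\neq a$.
\end{itemize}
Because $\{f_w\}$ is winning on $G$, some vertex $w'\notin\{u,v\}$ guesses correctly under this extension. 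That vertex has the same view and the same strategy in $G-v$, so it also guesses correctly under $c'$. Therefore the new strategy is winning on $G-v$, and $\HGP(G-v)\ge q$.

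The only delicate step is the counting argument showing that a view has at most one forced color. This is precisely where the hypothesis $q\ge 5$ is needed.
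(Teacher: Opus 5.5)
Your proof is correct and follows essentially the same route as the paper. Only the strategy of the leaf's neighbor $u$ is changed, and the key count is that if all other vertices guess wrong, $u$ must guess $c'(u)$ for at least $q-2$ of the $q$ possible colors of $v$; for $q\ge 5$ this singles out that color. The paper decodes by taking the most frequent value of $x\mapsto f_u(w,x)$, which is a strict majority since $q-2>q/2$. You instead decode via the unique ``forced'' color and argue by contrapositive, but this difference is only cosmetic.
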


\begin{proof}
Suppose that $G$ is a graph with vertices $\{v_1,\dots,v_n\}$, where $v_1$ has $v_2$ as unique neighbor, and suppose that $q \coloneq \HGP(G) \geq 5$.
Let $\{ f_{v_i} \: : \: i = 1, \dots, n \}$ denote a winning strategy for the proper hat guessing game on $G$ with $q$ colors.
For any proper coloring $c$ of $N(v_2) \setminus \{v_1\}$, i.e.\ the set of neighbors of $v_2$ except for $v_1$, let $g_{v_2}(c)$ be the most frequent value of $f_{v_2}(b)$ where $b$ varies through the colorings of $N(v_2)$ that extend $c$.
If there is no unique most frequent value, then pick an arbitrary one.
We will prove that $\{g_{v_2}\} \cup \{f_{v_i} \: : \: i = 3, \dots, n\}$ is a winning strategy for the proper hat guessing game on $G-v_1$ with $q$ colors.

Let $c$ be a proper coloring of $G - v_1$ for which $v_3, \dots, v_n$ guess incorrectly.
Let $\Ex(c)$ denote the set of proper colorings of $G$ that extend $c$.
Note that we can color $v_1$ with any color in $[q] \setminus \{c(v_2)\}$, so $|\Ex(c)| = q-1$.
For each $b \in \Ex(c)$, the vertices $v_3, \dots, v_n$ guess incorrectly, since the color of $v_1$ does not influence their guess.
Thus, for each $b \in \Ex(c)$, according to strategies $f_{v_1}$ and $f_{v_2}$, either $v_1$ or $v_2$ must guess correctly.
Since the guess of $v_1$ is determined by $b(v_2) = c(v_2)$, there is only one coloring $b \in \Ex(c)$ for which $v_1$ guesses correctly.
Then $v_2$ guesses correctly for at least $q-2$ colorings $b \in \Ex(c)$.
This means that if we fix $c|_{N(v_2) \setminus \{v_1\}}$ and vary the color of $v_1$, there are at least $q-2$ colorings of $v_1$ for which $v_2$ guesses $c(v_2)$ according to strategy $f_{v_2}$.
Note that $q-2 > q/2$, since $q \geq 5$.
Then $g_{v_2}(c|_{N(v_2) \setminus \{v_1\}}) = c(v_2)$ by the definition of $g_{v_2}$.
Hence, $v_2$ guesses correctly for the coloring $c$ of $G - v_1$.
We conclude that $\{g_{v_2}\} \cup \{f_{v_i} \: : \: i = 3, \dots, n\}$ is a winning strategy on $G - v_1$.

Thus, $q \leq \HGP(G-v_1) \leq \HGP(G) = q$, which implies that $\HGP(G - v_1) = \HGP(G)$.
\end{proof}

\begin{Corollary}\label{Cor:LeafRemoval}
 For any graph $G$ containing some vertex $v$ of degree $1$, if $G-v$ contains $P_3$ as a subgraph, then $\HGP(G) = \HGP(G - v)$.
\end{Corollary}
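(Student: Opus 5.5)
This corollary should follow directly from Lemma~\ref{Lem:LeafRemoval}, once we check that its hypothesis $\HGP(G) \geq 5$ holds. The plan is to first get a lower bound from the $P_3$ subgraph, and then split according to whether that bound is already at least $5$.

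Since $G-v$ contains $P_3$ as a subgraph, Lemma~\ref{Lem:Subgraphs} together with Lemma~\ref{Lem:P3} gives $4 = \HGP(P_3) \leq \HGP(G-v)$. Because $G-v$ is itself a subgraph of $G$, Lemma~\ref{Lem:Subgraphs} also gives
\[
 4 \leq \HGP(G-v) \leq \HGP(G).
\]

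\emph{Case $\HGP(G) \geq 5$.} Lemma~\ref{Lem:LeafRemoval} applies to the degree-$1$ vertex $v$ and yields $\HGP(G-v) = \HGP(G)$ directly.

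\emph{Case $\HGP(G) = 4$.} The displayed chain of inequalities forces $\HGP(G-v) = 4 = \HGP(G)$, which is again the claim.

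There is no real obstacle here. The only point requiring care is to note that the inequality $\HGP(G-v) \leq \HGP(G)$ from Lemma~\ref{Lem:Subgraphs} settles the boundary case $\HGP(G)=4$, which Lemma~\ref{Lem:LeafRemoval} does not cover.
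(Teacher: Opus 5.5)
Your proof is correct and is essentially the paper's own argument: the $P_3$ subgraph gives $4 \le \HGP(G-v) \le \HGP(G)$, Lemma~\ref{Lem:LeafRemoval} handles the case $\HGP(G)\ge 5$, and in the remaining case $\HGP(G)=4$ the chain of inequalities forces equality.
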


\begin{proof}
 Note that $G-v$, and thus also $G$, contains $P_3$ as a subgraph.
 Therefore, the proper hat guessing numbers of $G$ and $G-v$ are at least $4$.
 If $\HGP(G) \geq 5$, then $\HGP(G - v) = \HGP(G)$ by Lemma \ref{Lem:LeafRemoval}.
 Otherwise, we have $4 \leq \HGP(G-v) \leq \HGP(G) \leq 4$, hence $\HGP(G-v) = \HGP(G) = 4$.
\end{proof}

Now Theorem \ref{Thm:Trees} follows easily.
\newline

{\scshape Theorem~\ref{Thm:Trees}.}
 {\it For every tree $T$ on at least $3$ vertices, $\HGP(T) = 4$.}
\begin{proof}
 We prove this by induction on the number $n$ of vertices of the tree.
 The base case $n=3$ was proven in Lemma~\ref{Lem:P3}.
 The induction step follows from Corollary~\ref{Cor:LeafRemoval}, since every tree has vertices of degree 1.
\end{proof}

\setcounter{section}{4}

\section{Book graphs}
\label{sec:book_graphs}

Let $c_q(G)$ denote the number of proper colorings of a graph $G$ using $q$ colors. For example, the number of proper colorings of the complete graph $K_n$ using $q$ colors is the falling factorial $c_q(K_n) = (q)_n = q(q-1)\dots(q-n+1)$.

Let $k,n$ be positive integers. The book graph $B_{k,n}$ is the complete connection between a clique of size $k$ and an independent set of size $n$. The clique is referred to as the spine of the book graph and the vertices in the independent set are referred to as the pages of the book graph. Given any of the $(q)_n$ proper colorings of the spine using $q$ colors, we may color the pages using the remaining $q-k$ colors in any way to obtain a proper coloring of the book graph. Therefore, $c_q(B_{k,n}) = (q)_k(q-k)^n$. 
We can use this to give a refinement of Lemma~\ref{Lem:ChromaticNumber} for book graphs.

\begin{Lemma}\label{Lem:BookGraphCondition}
    Let $k,n,q$ positive integers such that $q \geq k+1$. We have $\HGP(B_{k,n}) < q$ whenever
    \begin{equation*}
        (q-k-1)^n - k(q-k+1)^{n-1} > 0.
    \end{equation*}
\end{Lemma}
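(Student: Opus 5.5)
The plan is a double-counting argument over proper colorings. Fix $q \geq k+1$ satisfying the inequality, and fix an arbitrary guessing strategy $\{f_w\}$ for $B_{k,n}$ with $q$ colors. I will show that some proper coloring makes every vertex guess wrong. To do this I compare two quantities:
\begin{itemize}
\item[(a)] a lower bound on the number of proper colorings in which every \emph{page} guesses wrong;
\item[(b)] an upper bound on the number of proper colorings in which at least one \emph{spine} vertex guesses correctly.
\end{itemize}
If (a) exceeds (b), some coloring has all pages wrong and no spine vertex correct, so the strategy is not winning.

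For (a), I first fix one of the $(q)_k$ proper colorings of the spine. Each page sees exactly the spine, so its guess is now determined. Its admissible colors are the $q-k$ colors not used on the spine, and at most one of these equals its guess. So each page independently has at least $q-k-1$ wrong admissible colors. Hence the number of proper colorings with all pages wrong is at least
\[
(q)_k (q-k-1)^n = (q)_{k-1}(q-k+1)(q-k-1)^n .
\]

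For (b), I fix a spine vertex $v$ and count proper colorings in which $v$ guesses correctly. Such a coloring is determined by its restriction to $V\setminus\{v\}$, because $v$'s color must equal $f_v$ of that restriction. The restriction is a proper coloring of $B_{k-1,n}$. There are $(q)_{k-1}$ choices for the other spine vertices, and then each page takes one of the $q-k+1$ colors not on those vertices. So at most $(q)_{k-1}(q-k+1)^n$ colorings have $v$ correct. Some of these restrictions may not extend properly with the forced color, but that only helps the upper bound. A union bound over the $k$ spine vertices gives at most $k(q)_{k-1}(q-k+1)^n$ colorings in which some spine vertex is correct.

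Comparing the two bounds and dividing by $(q)_{k-1}(q-k+1) > 0$ (positive since $q\ge k+1$), (a) exceeds (b) exactly when
\[
(q-k-1)^n > k(q-k+1)^{n-1},
\]
which is the hypothesis. Therefore a proper coloring exists with all pages wrong and every spine vertex wrong, so $\HGP(B_{k,n})<q$.

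The main point needing care is the counting in (b). The guess of $v$ depends on all pages and all other spine vertices, so I must not try to combine it with the page-wrong condition. Bounding it crudely via the forced color of $v$ is exactly what yields the factor $(q-k+1)^{n-1}$ after cancellation.
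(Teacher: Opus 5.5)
Your proof is correct and is essentially the paper's argument. The paper bounds the number of colorings in which some vertex guesses correctly by $k(q)_{k-1}(q-k+1)^n + c_q(B_{k,n}) - (q)_k(q-k-1)^n$ and compares this with $c_q(B_{k,n})$; once the total count cancels, this is exactly your comparison of all-pages-wrong colorings against some-spine-vertex-correct colorings. Your treatment of the page count as a lower bound, since a page's guess may be a spine color, is slightly more careful than the paper's wording.
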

\begin{proof}
    Fix some strategy for the proper hat guessing game on $B_{k,n}$ with $q$ colors. Note that the total number of proper colorings of $B_{k,n}$ using $q$ colors is $c_q(B_{k,n}) = (q)_k(q-k)^n$.

    Consider some spine vertex $v$. 
    Its guess gets fixed by a coloring of the other $k-1+n$ vertices, which induce a subgraph isomorphic to $B_{k-1,n}$. 
    Such a coloring has at most one extension in which $v$ guesses correctly, namely the extension assigning the guessed color to $v$. 
    Therefore there are at most $c_q(B_{k-1,n}) = (q)_{k-1}(q-(k-1))^n$ proper colorings such that the selected spine vertex $v$ guesses correctly.

    Now consider the pages. All of their guesses get fixed by a proper coloring of the spine. For any such coloring there are $q-k-1$ ways to color a page vertices to preserve the proper coloring and make sure the page vertex does not guess correctly. So there are $(q)_k(q-k-1)^n$ proper colorings such that the pages all guess incorrectly. Complementary, there are $c_q(B_{k,n}) - (q)_k(q-k-1)^n$ proper colorings such that some page vertex guesses correctly.

    By the union bound, there are at most 
    \begin{align*}
        k(q)_{k-1}(q-(k-1))^n + c_q(B_{k,n}) - (q)_k(q-k-1)^n \\
        = c_q(B_{k,n}) + (q)_k\big(k(q-(k-1))^{n-1} - (q-k-1)^n\big)
    \end{align*}
    proper colorings such that some vertex guesses correctly. When the condition in the statement is satisfied, then $c_q(B_{k,n})$ is more than the number of proper colorings such that some vertex guesses correctly. So there exists some coloring such that no vertex guesses correctly and thus $\HGP(B_{k,n}) < q$.
\end{proof}

The condition in the above Lemma \ref{Lem:BookGraphCondition} may be satisfied by a sufficiently small value of $q$, when the number of pages $n$ is even. However, we require $q \geq k+1$ to guarantee the existence of a proper coloring. Consequently, $q$ cannot be small enough for the condition to be satisfied in this manner.

Note that for book graphs we have $\HGP(B_{k,n}) < n + 2k + 1$ by Lemma \ref{Lem:ChromaticNumber}. Using the above result, we can show a stronger upper bound holds when the number of pages $n$ is sufficiently larger than the size of the spine $k$.
\newline

{\scshape Theorem \ref{Thm:BookGraphUpperBound}.}
 {\it If $n > e^2 k$, then
 \[
  \HGP(B_{k,n}) < \left( \frac{e^2 k}{n} \right)^{1/n}n + k + 1.
 \]
}
\begin{proof}
 Using Lemma~\ref{Lem:BookGraphCondition}, it suffices to check that
 \[
  \left( \left( \frac{e^2 k}n \right)^{1/n} n + k + 1 - k -1 \right)^n - k \left( \left( \frac{e^2 k}n \right)^{1/n} n + k + 1 - k +1 \right)^{n-1} > 0.
 \]
 This simplifies to checking that
 \[
  e^2 n^{n-1} > \left( \left( \frac{ e^2 k}n \right)^{1/n} n + 2 \right)^{n-1}.
 \]
 Since $\frac{e^2 k}n < 1$, it suffices that $e^2 n^{n-1} > (n+2)^{n-1}$, or equivalently $e^2 > \left( 1 + \frac 2n \right)^{n-1}$.
 This is easily checked to hold since $e^2 > \left( 1 + \frac 2n \right)^n > \left( 1 + \frac 2n \right)^{n-1}$ for every positive number $n$.
\end{proof}

\begin{Theorem}
 \label{Thm:Book:2nd upper bound}
 If $k = \mathcal O(n^{1-\varepsilon})$ for some $\varepsilon > 0$, then $\HGP(B_{k,n}) = \mathcal O(n/\ln n)$.
\end{Theorem}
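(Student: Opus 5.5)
We will derive this from Lemma~\ref{Lem:BookGraphCondition}, choosing $q$ so that the number of colors beyond the spine is of order $n/\ln n$. Write $q = k+m$ with $m \ge 2$ an integer to be chosen; then $q \ge k+1$ holds automatically. The condition of Lemma~\ref{Lem:BookGraphCondition} becomes
\[
 (m-1)^n > k(m+1)^{n-1},
 \qquad\text{equivalently}\qquad
 (m-1)\left(1-\frac{2}{m+1}\right)^{n-1} > k .
\]
Once this holds, we get $\HGP(B_{k,n}) < k+m$. Since $k = \mathcal O(n^{1-\varepsilon}) = o(n/\ln n)$, it then suffices to take $m = \mathcal O(n/\ln n)$.

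\textbf{Lower bound on the left-hand side.} We use the elementary inequality $1-x \ge e^{-x/(1-x)}$ for $0 \le x < 1$. With $x = 2/(m+1)$ we have $x/(1-x) = 2/(m-1)$, so
\[
 (m-1)\left(1-\frac{2}{m+1}\right)^{n-1} \ge (m-1)\exp\!\left(-\frac{2(n-1)}{m-1}\right) \ge (m-1)\exp\!\left(-\frac{2n}{m-1}\right).
\]

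\textbf{Choice of $m$.} Fix constants $A$ and $n_0$ with $k \le A n^{1-\varepsilon}$ for all $n \ge n_0$. Let $C = 4/\varepsilon$ and set $m = \lceil C n/\ln n\rceil + 1$, so that $m-1 \ge Cn/\ln n$. Then
\[
 \exp\!\left(-\frac{2n}{m-1}\right) \ge \exp\!\left(-\frac{2\ln n}{C}\right) = n^{-\varepsilon/2},
\]
and therefore the left-hand side is at least
\[
 \frac{C\,n^{1-\varepsilon/2}}{\ln n}.
\]
This exceeds $A n^{1-\varepsilon}$ as soon as $n^{\varepsilon/2} > (A/C)\ln n$, which holds for all sufficiently large $n$.

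\textbf{Conclusion and the main obstacle.} For all large $n$ we then obtain
\[
 \HGP(B_{k,n}) < k + m \le A n^{1-\varepsilon} + \frac{Cn}{\ln n} + 2 = \mathcal O\!\left(\frac{n}{\ln n}\right).
\]
The finitely many small values of $n$ only affect the implied constant. The one delicate point is tuning $C$ against $\varepsilon$: the exponential loss $n^{-2/C}$ must remain smaller than the polynomial gap $n^{\varepsilon}$ between $n/\ln n$ and $k$. This is exactly why the hypothesis $k = \mathcal O(n^{1-\varepsilon})$ is needed; with $C$ taken proportional to $1/\varepsilon$ as above, everything else is routine.
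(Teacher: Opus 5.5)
Your proposal is correct and follows essentially the same route as the paper. Both apply Lemma~\ref{Lem:BookGraphCondition} with $x=q-k-1$ (your $m-1$) of order $n/\ln n$, bound $\bigl(\tfrac{x+2}{x}\bigr)^{n-1}$ by $e^{2n/x}$ (your inequality $1-y\ge e^{-y/(1-y)}$ is the same estimate in reciprocal form), and take the constant in $x = cn/\ln n$ larger than $2/\varepsilon$. Your write-up is slightly more careful: it handles integrality of $m$ via the ceiling, and it states the final inequality as holding for $n$ large, where the paper says ``for $k$ large enough.''
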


\begin{proof}
 Write $x = q - k -1$.
 By Lemma~\ref{Lem:BookGraphCondition}, if $x^n > k (x + 2)^{n-1}$, then $\HGP(B_{k,n}) < q$.
 We can rewrite this as $x > k \left( 1 + \frac 2x \right)^{n-1}$.
 Note that
 \[
  \left( 1 + \frac 2x \right)^{n-1}
  = \left( \left( 1 + \frac 2x \right)^x \right)^\frac{n-1}x
  < e^{2\frac{n-1}x} < e^{2\frac nx}.
 \]
 Thus, it suffices that $x \geq k e^{2n/x}$.
 Therefore, we want to show that there exists a constant $c$ such that if we write $x = c n / \ln n$, then
 \[
  c \frac n {\ln n} \geq k e^\frac{2n}{c n / \ln n} = k n^{2/c}
 \]
 holds for $k$ large enough.
 In other words, $c n^{1 - 2/c} \geq k \ln n$.
 If we pick $c > 2 / \varepsilon$, then this holds for $k$ large enough, and we have for large $k$ that $\HGP(B_{k,n}) < c \frac n {\ln n} + k + 1 = \mathcal O(n / \ln n)$.
\end{proof}

We know from \cite[Theorem 1.2]{He2022} that if $k$ is fixed, and $n$ is sufficiently large, then $\HG(B_{k,n}) = 1 + \sum_{m=1}^k m^m$, which is independent of $n$.
Note that $\chi(B_{k,n}) = k+1$.
Hence, by Proposition~\ref{Lem:BoundFromOrdinaryHG}
\[
 \HGP(B_{k,n}) < (k+1)\left( 2 + \sum_{m=1}^k m ^m \right).
\]
In particular, if we fix $k$ and let $n \to \infty$, then $\HGP(B_{k,n})$ is also bounded by a constant.

\section{Small Graphs}
\label{sec:small_graphs}

The regular hat guessing number is known for all graphs on 5 or fewer vertices. In this section we give an overview of the proper hat guessing number for a number of small graphs. We will only consider connected graphs, as the proper hat guessing number of disconnected graphs is the maximum of the proper hat guessing numbers of its connected parts. Furthermore, by Corollary \ref{Cor:LeafRemoval}, we may remove vertices of degree 1, so we will not consider graphs whose hat guessing number can be derived from smaller graphs in this manner. At the end of this section we give an overview of what can be derived about the proper hat guessing numbers of such graphs with $5$ or fewer vertices. First, we present an ILP formulation of the proper hat guessing number in the form of a feasibility problem. 
We have used this ILP to solve the cases of $C_4$ and $K_4 - e$. 

Let $G=(V,E)$ be a graph with $V=\{v_1,\dots,v_n\}$ and fix a positive integer $q$.
Fix, for each $i\in[n]$, an order on the neighborhood $N_G(v_i)$ so that for any
coloring $c=(c_1,\dots,c_n)\in [q]^n$ the neighborhood pattern
\[
P_i(c)\;=\;\bigl(c(x):x\in N_G(v_i)\bigr)
\]
is an \emph{ordered} tuple. Let $\mathcal{C}_q\subseteq [q]^n$ be the set of all proper
colorings of $G$. For each $i\in[n]$ define the set of attainable patterns
\[
\Pi_i \;:=\; \{\,P_i(c): c\in \mathcal{C}_q\,\}.
\]

\paragraph{Variables.}
For each $i\in[n]$, $P\in\Pi_i$, and $g\in [q]\setminus P$,
let $f_i(P,g)\in\{0,1\}$ indicate whether vertex $v_i$ guesses color $g$ upon seeing
pattern $P$.

\paragraph{ILP (feasible iff $\HGP(G)\ge q$).}
\begin{align*}
\textbf{Minimize}\quad & 0 \\[-2pt]
\textbf{Subject to}\quad
& \sum_{g\in [q]\setminus P} f_i(P,g) \;=\; 1
&& \forall\, i\in[n],\ \forall\, P\in \Pi_i \\[4pt]
& \sum_{i=1}^{n} f_i\!\bigl(P_i(c),\, c_i\bigr) \;\ge\; 1
&& \forall\, c=(c_1,\dots,c_n)\in \mathcal{C}_q \\[4pt]
& f_i(P,g)\in\{0,1\}
&& \forall\, i\in[n],\ \forall\, P\in \Pi_i,\ \forall\, g\in [q]\setminus P\,.
\end{align*}

Table \ref{Table:small} contains all connected graphs on $5$ or fewer vertices with minimum degree 2 when applicable and what is known about their proper hat guessing number. Note that for the proper hat guessing number of a subgraph $H \subseteq G$ we have $\HGP(H) \leq \HGP(G)$ by Lemma \ref{Lem:Subgraphs}.

\begin{table} 
 \centering
\begin{tabular}{c c c c}
\hline
Name & $G$ & $\HGP(G)$ & Proof \\
\hline
\multicolumn{3}{c}{\textbf{1 vertex}}\\
$K_1$ &
\begin{tikzpicture}[baseline=-0.5ex, scale=0.9]
    \node[circle,fill,inner sep=1.4pt] at (0,0) {};
\end{tikzpicture}
& $1$ & Theorem \ref{Thm:CompleteGraphs} \\[6pt]
\hline
\multicolumn{3}{c}{\textbf{2 vertices}}\\
$K_2$ &
\begin{tikzpicture}[baseline=-0.5ex, scale=0.9]
    \node[circle,fill,inner sep=1.4pt] (a) at (-0.5,0) {};
    \node[circle,fill,inner sep=1.4pt] (b) at (0.5,0) {};
    \draw (a)--(b);
\end{tikzpicture}
& $3$ & Theorem \ref{Thm:CompleteGraphs} \\[6pt]
\hline
\multicolumn{3}{c}{\textbf{3 vertices}}\\
$P_3$ &
\begin{tikzpicture}[baseline=-0.5ex, scale=0.9]
  \node[circle,fill,inner sep=1.4pt] (a) at (0,0) {};
  \node[circle,fill,inner sep=1.4pt] (b) at (1,0) {};
  \node[circle,fill,inner sep=1.4pt] (c) at (2,0) {};
  \draw (a)--(b)--(c);
\end{tikzpicture}
& $4$ & Lemma \ref{Lem:P3} \\[6pt]
$K_3$ &
\begin{tikzpicture}[baseline=-0.5ex, scale=0.9]
  \node[circle,fill,inner sep=1.4pt] (a) at (0,0) {};
  \node[circle,fill,inner sep=1.4pt] (b) at (1,0) {};
  \node[circle,fill,inner sep=1.4pt] (c) at (0.5,0.866) {};
  \draw (a)--(b)--(c)--(a);
\end{tikzpicture}
& $5$ & Theorem \ref{Thm:CompleteGraphs} \\[6pt]
\hline
\multicolumn{3}{c}{\textbf{4 vertices}}\\
$C_4$ &
\begin{tikzpicture}[baseline=-0.5ex, scale=0.9]
  \node[circle,fill,inner sep=1.4pt] (a) at (0,0) {};
  \node[circle,fill,inner sep=1.4pt] (b) at (1,0) {};
  \node[circle,fill,inner sep=1.4pt] (c) at (1,1) {};
  \node[circle,fill,inner sep=1.4pt] (d) at (0,1) {};
  \draw (a)--(b)--(c)--(d)--(a);
\end{tikzpicture}
& $4$ & ILP \\[6pt]
$K_4 - e$ &
\begin{tikzpicture}[baseline=-0.5ex, scale=0.9]
  \node[circle,fill,inner sep=1.4pt] (a) at (0,0) {};
  \node[circle,fill,inner sep=1.4pt] (b) at (1,0) {};
  \node[circle,fill,inner sep=1.4pt] (c) at (1,1) {};
  \node[circle,fill,inner sep=1.4pt] (d) at (0,1) {};
  \draw (a)--(b)--(c)--(d)--(a) (b)--(d);
\end{tikzpicture}
& $6$ & ILP \\[6pt]
$K_4$ &
\begin{tikzpicture}[baseline=-0.5ex, scale=0.9]
  \node[circle,fill,inner sep=1.4pt] (a) at (0,0) {};
  \node[circle,fill,inner sep=1.4pt] (b) at (1,0) {};
  \node[circle,fill,inner sep=1.4pt] (c) at (1,1) {};
  \node[circle,fill,inner sep=1.4pt] (d) at (0,1) {};
  \draw (a)--(b)--(c)--(d)--(a) (a)--(c) (b)--(d);
\end{tikzpicture}
& $7$ & Theorem \ref{Thm:CompleteGraphs} \\[6pt]
\hline
\multicolumn{3}{c}{\textbf{5 vertices}}\\
$C_5$ &
\begin{tikzpicture}[baseline=-0.5ex, scale=0.6]
  \node[circle,fill,inner sep=1.4pt] (a) at ({sin(0)},{cos(0)}) {};
  \node[circle,fill,inner sep=1.4pt] (b) at ({sin(72)},{cos(72)}) {};
  \node[circle,fill,inner sep=1.4pt] (c) at ({sin(144)},{cos(144)}) {};
  \node[circle,fill,inner sep=1.4pt] (d) at ({sin(216)},{cos(216)}) {};
  \node[circle,fill,inner sep=1.4pt] (e) at ({sin(288)},{cos(288)}) {};
  \draw (a)--(b)--(c)--(d)--(e)--(a);
\end{tikzpicture}
& $4 \leq \ldots \leq 5$ & Lemma \ref{Lem:P3}, Corollary~\ref{cor:cycles}\\[6pt]
$K_{2,3}$ &
\begin{tikzpicture}[baseline=-0.5ex, scale=0.6]
  \node[circle,fill,inner sep=1.4pt] (a) at ({sin(0)},{cos(0)-0.5}) {};
  \node[circle,fill,inner sep=1.4pt] (b) at ({sin(72)},{cos(72)}) {};
  \node[circle,fill,inner sep=1.4pt] (c) at ({sin(144)},{cos(144)}) {};
  \node[circle,fill,inner sep=1.4pt] (d) at ({sin(216)},{cos(216)}) {};
  \node[circle,fill,inner sep=1.4pt] (e) at ({sin(288)},{cos(288)}) {};
  \draw (c)--(a) (c)--(b) (c)--(e) (d)--(a) (d)--(b) (d)--(e);
\end{tikzpicture}
& $5$ & ILP \\[6pt]
$\text{Hourglass}$ &
\begin{tikzpicture}[baseline=-0.5ex, scale=0.9]
  \node[circle,fill,inner sep=1.4pt] (a) at (0,0) {};
  \node[circle,fill,inner sep=1.4pt] (b) at (1,0) {};
  \node[circle,fill,inner sep=1.4pt] (c) at (1,1) {};
  \node[circle,fill,inner sep=1.4pt] (d) at (0,1) {};
  \node[circle,fill,inner sep=1.4pt] (e) at (0.5,0.5) {};
  \draw (a)--(c)--(d)--(b)--(a);
\end{tikzpicture}
& $5 \leq \ldots \leq 7$ & Theorem \ref{Thm:CompleteGraphs}, Lemma \ref{Lem:ChromaticNumber} \\[6pt]
$\text{House}$ &
\begin{tikzpicture}[baseline=-0.5ex, scale=0.9]
  \node[circle,fill,inner sep=1.4pt] (a) at (0,0) {};
  \node[circle,fill,inner sep=1.4pt] (b) at (1,0) {};
  \node[circle,fill,inner sep=1.4pt] (c) at (1,1) {};
  \node[circle,fill,inner sep=1.4pt] (d) at (0,1) {};
  \node[circle,fill,inner sep=1.4pt] (e) at (0.5,1.5) {};
  \draw (a)--(b)--(c)--(d)--(a) (c)--(e)--(d);
\end{tikzpicture}
& $5 \leq \ldots \leq 6$ & Theorem \ref{Thm:CompleteGraphs}, Remark \ref{Rmk:ChromaticNumber} \\[6pt]
$B_{2,3}$ &
\begin{tikzpicture}[baseline=-0.5ex, scale=0.6]
  \node[circle,fill,inner sep=1.4pt] (a) at ({sin(0)},{cos(0)-0.5}) {};
  \node[circle,fill,inner sep=1.4pt] (b) at ({sin(72)},{cos(72)}) {};
  \node[circle,fill,inner sep=1.4pt] (c) at ({sin(144)},{cos(144)}) {};
  \node[circle,fill,inner sep=1.4pt] (d) at ({sin(216)},{cos(216)}) {};
  \node[circle,fill,inner sep=1.4pt] (e) at ({sin(288)},{cos(288)}) {};
  \draw (c)--(a) (c)--(b) (c)--(e) (d)--(a) (d)--(b) (d)--(e) (c)--(d);
\end{tikzpicture}
& $6 \leq \ldots \leq 7$ & ILP, Lemma \ref{Lem:BookGraphCondition} \\[6pt]
$5$-Fan &
\begin{tikzpicture}[baseline=-0.5ex, scale=0.6]
  \node[circle,fill,inner sep=1.4pt] (a) at ({sin(0)},{cos(0)}) {};
  \node[circle,fill,inner sep=1.4pt] (b) at ({sin(72)},{cos(72)}) {};
  \node[circle,fill,inner sep=1.4pt] (c) at ({sin(144)},{cos(144)}) {};
  \node[circle,fill,inner sep=1.4pt] (d) at ({sin(216)},{cos(216)}) {};
  \node[circle,fill,inner sep=1.4pt] (e) at ({sin(288)},{cos(288)}) {};
  \draw (a)--(b)--(c)--(d)--(e)--(a) (c)--(a)--(d);
\end{tikzpicture}
& $5 \leq \ldots \leq 7$ & Theorem \ref{Thm:CompleteGraphs}, Lemma \ref{Lem:ChromaticNumber} \\[6pt]
$\text{Broken Wheel}$ &
\begin{tikzpicture}[baseline=-0.5ex, scale=0.9]
  \node[circle,fill,inner sep=1.4pt] (a) at (0,0) {};
  \node[circle,fill,inner sep=1.4pt] (b) at (1,0) {};
  \node[circle,fill,inner sep=1.4pt] (c) at (1,1) {};
  \node[circle,fill,inner sep=1.4pt] (d) at (0,1) {};
  \node[circle,fill,inner sep=1.4pt] (e) at (0.5,0.5) {};
  \draw (a)--(b)--(c)--(d)--(a) (a)--(c) (e)--(d);
\end{tikzpicture}
& $6$ & ILP, Remark \ref{Rmk:ChromaticNumber} \\[6pt]
$\text{$W_4$}$ &
\begin{tikzpicture}[baseline=-0.5ex, scale=0.9]
  \node[circle,fill,inner sep=1.4pt] (a) at (0,0) {};
  \node[circle,fill,inner sep=1.4pt] (b) at (1,0) {};
  \node[circle,fill,inner sep=1.4pt] (c) at (1,1) {};
  \node[circle,fill,inner sep=1.4pt] (d) at (0,1) {};
  \node[circle,fill,inner sep=1.4pt] (e) at (0.5,0.5) {};
  \draw (a)--(b)--(c)--(d)--(a) (a)--(c) (d)--(b);
\end{tikzpicture}
& $6 \leq \ldots \leq 7$ & ILP, Lemma \ref{Lem:ChromaticNumber} \\[6pt]
$\text{Closed House}$ &
\begin{tikzpicture}[baseline=-0.5ex, scale=0.9]
  \node[circle,fill,inner sep=1.4pt] (a) at (0,0) {};
  \node[circle,fill,inner sep=1.4pt] (b) at (1,0) {};
  \node[circle,fill,inner sep=1.4pt] (c) at (1,1) {};
  \node[circle,fill,inner sep=1.4pt] (d) at (0,1) {};
  \node[circle,fill,inner sep=1.4pt] (e) at (0.5,1.5) {};
  \draw (a)--(b)--(c)--(d)--(a) (a)--(c)--(e)--(d)--(b);
\end{tikzpicture}
& $7$ & Theorem \ref{Thm:CompleteGraphs}, Remark \ref{Rmk:ChromaticNumber} \\[6pt]
$K_5-e$ &
\begin{tikzpicture}[baseline=-0.5ex, scale=0.6]
  \node[circle,fill,inner sep=1.4pt] (a) at ({sin(0)},{cos(0)}) {};
  \node[circle,fill,inner sep=1.4pt] (b) at ({sin(72)},{cos(72)}) {};
  \node[circle,fill,inner sep=1.4pt] (c) at ({sin(144)},{cos(144)}) {};
  \node[circle,fill,inner sep=1.4pt] (d) at ({sin(216)},{cos(216)}) {};
  \node[circle,fill,inner sep=1.4pt] (e) at ({sin(288)},{cos(288)}) {};
  \draw (a)--(b)--(c) (d)--(e)--(a) (a)--(c)--(e)--(b)--(d)--(a);
\end{tikzpicture}
& $7 \leq \ldots \leq 8$ & Theorem \ref{Thm:CompleteGraphs}, Lemma \ref{Lem:ChromaticNumber} \\[6pt]
$K_5$ &
\begin{tikzpicture}[baseline=-0.5ex, scale=0.6]
  \node[circle,fill,inner sep=1.4pt] (a) at ({sin(0)},{cos(0)}) {};
  \node[circle,fill,inner sep=1.4pt] (b) at ({sin(72)},{cos(72)}) {};
  \node[circle,fill,inner sep=1.4pt] (c) at ({sin(144)},{cos(144)}) {};
  \node[circle,fill,inner sep=1.4pt] (d) at ({sin(216)},{cos(216)}) {};
  \node[circle,fill,inner sep=1.4pt] (e) at ({sin(288)},{cos(288)}) {};
  \draw (a)--(b)--(c)--(d)--(e)--(a) (a)--(c)--(e)--(b)--(d)--(a);
\end{tikzpicture}
& $9$ & Theorem \ref{Thm:CompleteGraphs} \\[6pt]
\end{tabular}
 \caption{Proper hat guessing numbers on graphs on at most 5 vertices.}
  \label{Table:small}
\end{table}

\section{Conclusion and Future Work}

In this paper, we introduced a new variant of the hat guessing game on a graph.
We developed the basic theory, in particular determining the proper hat guessing number for complete graphs and trees.
In large part, parallels can be drawn to the study of the ordinary hat guessing number, but at some points the theory differs.
We end this paper with a non-exhaustive list of interesting questions that remain.

\begin{Problem}
 There are variations on the ordinary hat guessing game where e.g.\ players can have multiple guesses, or the number of possible hat colors is not the same for all players.
 Investigate these variations of the proper hat guessing game.
\end{Problem}

We remark that if each player is allowed $m$ guesses, then the analog of Lemma~\ref{Lem:ChromaticNumber} tells us that a winning game on $G$ uses at most $mn + \chi(G) - 1$ colors.
For the complete graph, this becomes $(m+1)n - 1$.
A small adaptation of the proof of Theorem~\ref{Thm:CompleteGraphs}, again relying on Hall's Matching Theorem, shows that there exists a winning strategy on $K_n$ with $(m+1)n-1$ colors and $m$ guesses.

\bigskip

We proved that $\HGP(G)$ is upper bounded by a constant times the maximum degree $\Delta(G)$.
A natural question is to determine the optimal constant.

\begin{Problem}
What is the smallest
constant $c>0$ such that $\HGP(G)\leq c \cdot \Delta(G)$ for all graphs $G$ with sufficiently large maximum degree? Theorem~\ref
{thm:HGPlinear} and Theorem~\ref{Thm:CompleteGraphs} imply that $2 \le c \le e+1$.
\end{Problem}

Another problem that remains is determining the proper hat guessing number of basic graph classes, such as cycles, wheel graphs, windmill graphs, and many more.
We touched on the proper hat guessing number for book graphs, but the problem is far from resolved.
We state the most important open question.

\begin{Problem}
 If we fix an integer $k$ and let $n \to \infty$, then we saw that $\HGP(B_{k,n})$ is bounded.
 Determine the maximum value of $\HGP(B_{k,n})$.
 We know that it is somewhere between $1 + \sum_{m=1}^k m^m$ and $(k+1)(2 + \sum_{m=1}^k m^m)$.
\end{Problem}

We can also wonder how the hat guessing number drops if we start with a complete graph and remove edges.

\begin{Problem}
 What is the proper hat guessing number of $K_n - e$, the complete graph minus an edge?
 We know that it is either $2n-2$ or $2n-3$.
\end{Problem}

From Remark \ref{Rmk:ChromaticNumber}, we see that if we fix a vertex $v$ of $K_n$ and remove at least 2 edges incident with $v$ from $K_n$, then the proper hat guessing number of the resulting graph is at most $2n-3$, and since it has $K_{n-1}$ as a subgraph it is actually equal to $2n-3$.

\section*{Acknowledgments}
This project began during the 2025 Polymath Jr.\ program. 
The program is partially supported by the National Science Foundation under award DMS-2218374. 
We thank the organizers and participants of this program.
Sam Adriaensen is supported by grant number 12A3Y25N of the Research Foundation Flanders (FWO).
This research was carried out as part of the Scientific Research Network W003324N
of Research Foundation Flanders (FWO).
We thank Raphael Mario Steiner for valuable insights and discussion.


\begin{thebibliography}{10}

\bibitem{Alon2020}
N.~Alon, O.~Ben-Eliezer, C.~Shangguan, and I.~Tamo.
\newblock The hat guessing number of graphs.
\newblock {\em J. Comb. Theory Ser. B}, 144:119--149, 2020.

\bibitem{Bradshaw2022}
P.~Bradshaw.
\newblock On the hat guessing number of a planar graph class.
\newblock {\em J. Comb. Theory Ser. B}, 156:174--193, 2022.

\bibitem{farnik}
M.~Farnik.
\newblock {\em A hat guessing game}.
\newblock PhD thesis, Jagiellonian University, 2015.



\bibitem{ErdosSpencer}
P.~Erd\H{o}s and J.~Spencer,
\emph{Lopsided Lov\'{a}sz local lemma and Latin transversals},
Discrete Applied Mathematics 30 (1991), 151--154.

\bibitem{ScottSokal}
A.~D. Scott and A.~D. Sokal,
\emph{The repulsive lattice gas, the independent-set polynomial, and the
Lov\'{a}sz local lemma},
Journal of Statistical Physics 118 (2005), 1151--1261,
DOI: 10.1007/s10955-004-2055-4.

\bibitem{KnuthBook}
D.~E. Knuth,
\emph{The Art of Computer Programming, Volume~4, Fascicle~6:
Satisfiability},
Addison--Wesley, Boston, MA, 2015.




\bibitem{Feige2004}
U.~Feige.
\newblock You can leave your hat on (if you guess its color).
\newblock Technical Report MCS04-03, The Weizmann Institute of Science, 2004.

\bibitem{Greene1976}
C.~Greene and D.~J. Kleitman.
\newblock Strong versions of {S}perner's theorem.
\newblock {\em J. Comb. Theory Ser. A}, 20(1):80--88, 1976.

\bibitem{He2022}
X.~He, Y.~Ido, and B.~Przybocki.
\newblock Hat guessing on books and windmills.
\newblock {\em Electron. J. Combin.}, 29(1):Paper No. 1.12, 19, 2022.

\bibitem{Kierstead1988}
H.~Kierstead and W.~Trotter.
\newblock Explicit matchings in the middle levels of the boolean lattice.
\newblock {\em Order}, 5(2):163--171, June 1988.

\bibitem{Knierim2023}
C.~Knierim, A.~Martinsson, and R.~Steiner.
\newblock Hat guessing numbers of strongly degenerate graphs.
\newblock {\em SIAM Journal on Discrete Mathematics}, 37(2):1331--1347, June 2023.

\bibitem{latyshev2023}
A.~Latyshev and K.~Kokhas.
\newblock Hat guessing number of planar graphs is at least 22.
\newblock {\em Discrete Mathematics}, 347:113820, 2024.

\bibitem{Szczechla2017}
W.~Szczechla.
\newblock The three color hat guessing game on cycle graphs.
\newblock {\em Electron. J. Combin.}, 24(1):Paper No. 1.37, 19, 2017.

\bibitem{Winkler2001}
P.~Winkler.
\newblock Games people don't play.
\newblock In D.~Wolfe and T.~Rodgers, editors, {\em Puzzlers' Tribute: A Feast for the Mind}, pages 301--313. A K Peters/CRC Press, Natick, MA, 2001.

\end{thebibliography}
\end{document}